\newtheorem{thm}{Theorem}
\newtheorem{lem}[thm]{Lemma}
\newtheorem{cor}[thm]{Corollary}
\newtheorem{prop}[thm]{Proposition}
\theoremstyle{definition}
\newcommand{\bi}{\begin{itemize}}
\newcommand{\ei}{\end{itemize}}
\newcommand{\be}{\begin{enumerate}}
\newcommand{\ee}{\end{enumerate}}
\newcommand{\bc}{\begin{center}}
\newcommand{\ec}{\end{center}}
\newcommand{\bt}{\begin{tabular}}
\newcommand{\et}{\end{tabular}}
\newcommand{\ba}{\begin{array}}
\newcommand{\ea}{\end{array}}
\newcommand{\Diek}{Diek}
\newcommand{\EEO}{EEO}
\newcommand{\Miller}{Miller}
\newcommand{\Ushakov}{Ushakov}
\newcommand{\Groves}{MR1425318}
\newcommand{\WordProc}{MR1161694}
\begin{document}

\subjclass[2000]{20F65, 68Q25}

\title[A linear-time algorithm to compute geodesics]
{A linear-time algorithm to  compute geodesics in  solvable Baumslag-Solitar groups}

\author[M. Elder] {Murray Elder}
\address{Mathematics, University of Queensland, Brisbane, Australia}
\email[url]{murrayelder@gmail.com,
http://sites.google.com/site/melderau/}

\keywords{Baumslag-Solitar group, metabelian group, solvable group, linear time algorithm, geodesic}
 \date{\today}

\begin{abstract}
We present an algorithm to convert a word of length $n$ in the standard generators of the solvable Baumslag-Solitar group $BS(1,p)$ into a geodesic word, which runs in linear time and $O(n\log n)$ space 
on a random access machine.
\end{abstract}

\maketitle

\section{Introduction}

Recently Miasnikov, Roman'kov, Ushakov and Vershik \cite{\Ushakov} proved that for free metabelian groups with  standard generating sets, the following problem is NP-complete: \bi\item given a word in the generators and an integer $k$, decide whether the geodesic length of the word is less than $k$.\ei
They call this the {\em bounded geodesic length problem} for a group $G$ with finite generating set $\mathcal G$.
It follows that given a word, computing its geodesic length, and finding an explicit geodesic representative for it, are NP-hard problems. These problems are referred to as the {\em geodesic length problem} and the {\em geodesic problem} respectively.

In this article we consider the same problems for a different class of metabelian groups, the well known
{\em Baumslag-Solitar groups}, with presentations
\[\langle a, t \; | \; tat^{-1}=a^p\rangle\] for any integer $p\geq 2$.
We give a deterministic algorithm which takes as input a word   in the generators $a^{\pm 1}, t^{\pm 1}$ of length $n$, and outputs a geodesic word representing the same group element, 
in   time $O(n)$. Consequently, the three problems are solvable in linear time\footnote{It is clear that solving the geodesic problem implies the other two. In \cite{ER} the author and Rechnitzer show they are all in fact equivalent}.

In an unpublished preprint  \cite{\Miller} Miller gives a procedure to convert a word in the above group to a geodesic of the form $t^{-k}zt^{-m}$ where $z$ belongs to a regular language over the alphabet $\{a,a^{-1},t\}$. Miller's algorithm would take exponential time in the worst case. The algorithm presented here  follows Miller's procedure with some modifications (using pointers in part one and a careful tracking procedure in part two) to ensure linear time and $O(n\log n)$ space. Also, our algorithm does not output normal forms -- the geodesic output depends on the input word. A geodesic normal form is easily obtainable however, if one first runs a (polynomial time) algorithm to convert input words into a normal form (see for example \cite{\EEO}).

We use as our computational model a {\em random access machine}, which allows us to access (read, write and delete) any specified  location in an array in constant time.

Recent work of Diekert and Laun \cite{\Diek} extends the result of this paper to groups of the form $\langle a, t \; | \; ta^pt^{-1}=a^q\rangle$ when $p$ divides $q$. Their algorithm runs in quadratic time, but in the case $p=1$ the time reduces to linear, although their algorithm is qualitatively different.

The author wishes to thank Sasha Ushakov and Alexei Miasnikov for suggesting this problem, and Sasha, Alexei, Alex Myasnikov, Igor Lysionok, Andrew Rechnitzer and Yves Stalder for many helpful suggestions. The author thanks the  anonymous reviewer for their careful reading and instructive comments and corrections.
The author gratefully  acknowledges the support of the Algebraic Cryptography Center at Stevens Institute of Technology.

\section{Preliminaries}

Fix $G_p$ to be the Baumslag-Solitar group $\langle a,t \; |\; t^{-1}at=a^p\rangle$ for some $p\geq 2$.
We will call a single relator a  {\em brick}, with sides labeled by $t$ edges, as in Figure \ref{fig:brick-sheet}. The Cayley graph can be obtained by gluing together these bricks. We call a 
 {\em sheet} a subset of the Cayley graph made by laying rows of bricks atop each other to make a plane, also shown in Figure \ref{fig:brick-sheet}. 
\begin{figure}[ht!]
\bc
\bt{cc}
\psfrag{t}{$t$}\psfrag{a}{$a$} 
\includegraphics[scale=.34]{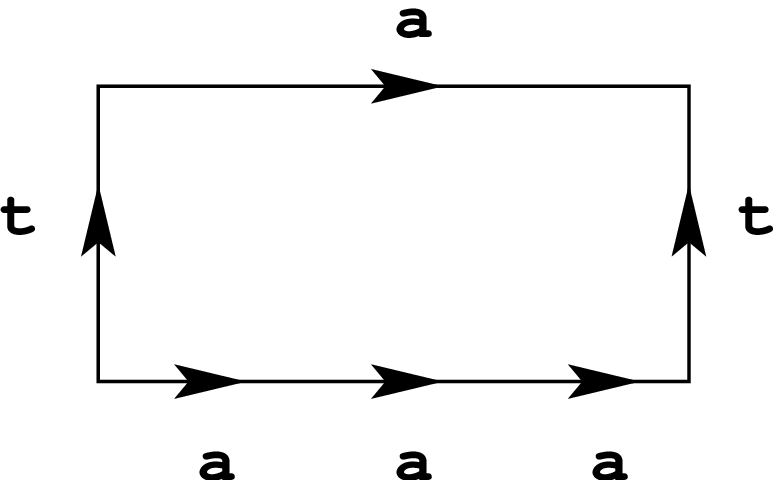} &
\psfrag{t}{$t$}\psfrag{a}{$a$} \includegraphics[scale=.34]{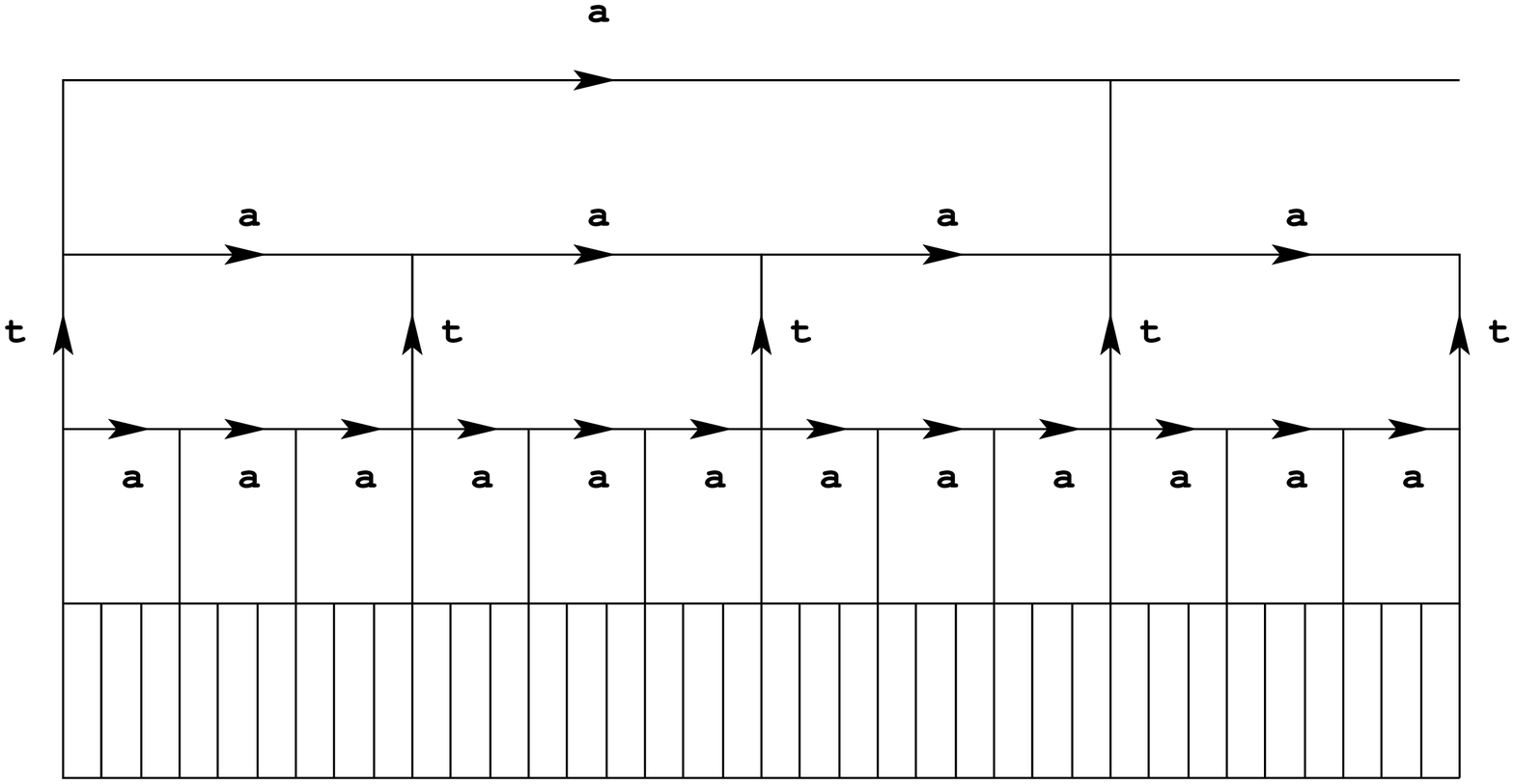} \\
 A brick & A sheet \\
\et\ec\caption{Parts of the Cayley graph of $G_3$}
\label{fig:brick-sheet}
\end{figure}
The complete Cayley graph is obtained by gluing these sheets together so that every vertex has degree four. From side-on the Cayley graph looks like a rooted $p$-ary tree. Some nice pictures can be found in \cite{\WordProc} pp.155-160.

We fix an orientation for the Cayley graph  by making $t$ edges go up vertically, and $a$ edges running horizontally from left to right.  Given this convention we can speak of the {\em top} or {\em bottom} of a brick. We  define the  {\em level} of a vertex in the Cayley graph to be the $t$-exponent sum of a word starting from the identity vertex to it.   So the identity is at level $0$, and $t^l$ is at level $l$. Note that this is well defined since if $u=_Gv$ then Britton's lemma implies $u$ and $v$ have the same $t$-exponent sum.

A word in the generators  $a^{\pm 1}, t^{\pm 1}$ is said to be of the form $P$ if it contains no $t^{-1}$ letters and at least one $t$ letter, and  of the form $N$ if it contains no $t$'s and at least one $t^{-1}$. Then a word is of the form $PNP$ say, if it is the concatenation of three words of the form $P,N,P$ in that order.  The $t$-exponent sum of a word is the number of $t$ letters minus the number of $t^{-1}$ letters.
We write $=_G$ when two words represent the same element in the group and $=$ when they are identical as strings, and $\ell(w)$ is the number of letters in the string $w$.

The following simple lemma and corollaries come from \cite{\Miller} and \cite{\Groves}.
\begin{lem}[Commutation]\label{lem:comm}
If $u,v$ have $t$-exponent sum zero 
then $uv=_Gvu$.
\end{lem}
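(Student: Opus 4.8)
The plan is to show that every word of $t$-exponent sum zero represents an element lying in the subgroup generated by the conjugates $t^{h}at^{-h}$ ($h\in\mathbb Z$) of $a$ by powers of $t$, and then to show that any two such conjugates commute. Since $u$ and $v$ will each be a product of such conjugates, and the conjugates pairwise commute, they generate an abelian subgroup, so $uv=_Gvu$ follows immediately. Structurally this is just the assertion that the kernel of the $t$-exponent-sum homomorphism $G_p\to\mathbb Z$ (sending $a\mapsto 0$, $t\mapsto 1$) is abelian, but I would give the combinatorial argument to stay in keeping with the rest of the paper.

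First I would record the rewriting (``collection'') step. Writing $u=a^{f_0}t^{g_1}a^{f_1}\cdots t^{g_r}a^{f_r}$ and setting $h_j=g_1+\cdots+g_j$, one checks by telescoping (inserting cancelling pairs $t^{-h_j}t^{h_j}$) that
\[ u \;=_G\; \prod_{j=0}^{r} t^{h_j}a^{f_j}t^{-h_j}, \qquad h_0=0. \]
The single place the hypothesis enters is that the total $t$-exponent sum $h_r=\sum_i g_i$ vanishes, so the trailing factor $t^{-h_r}$ is trivial and no dangling power of $t$ survives; this is exactly what places $u$ in the span of the conjugates of $a$. The same applies to $v$.

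It then remains to prove that $t^{j}a^{f}t^{-j}$ and $t^{k}a^{g}t^{-k}$ commute for all integers $j,k,f,g$. The key tool is the iterated relation
\[ t^{-n}at^{n}=a^{p^{n}} \qquad (n\ge 0), \]
obtained by induction from $t^{-1}at=a^{p}$. Choosing $M\ge\max(j,k)$ and conjugating both elements by $t^{-M}$ converts each into an honest power of $a$, namely $t^{-M}(t^{j}a^{f}t^{-j})t^{M}=a^{f p^{M-j}}$ and similarly $t^{-M}(t^{k}a^{g}t^{-k})t^{M}=a^{g p^{M-k}}$. Powers of $a$ obviously commute, and since conjugation by $t^{-M}$ is an automorphism of $G_p$, the original elements commute as well.

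I expect this last uniformization --- pushing both conjugates down to a common level $M$ where they become ordinary powers of $a$ --- to be the one genuinely load-bearing step, since it is where the group relation is actually used; the collection step and the final assembly are routine bookkeeping. Combining the two steps, $u$ and $v$ lie in the abelian subgroup generated by the $t$-conjugates of powers of $a$, whence $uv=_Gvu$.
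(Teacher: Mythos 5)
Your proof is correct. The paper itself gives no argument for this lemma (it is quoted from Miller's preprint and the cited reference), so there is nothing to diverge from; your argument --- collecting a zero-$t$-exponent word into a product of conjugates $t^{h}a^{f}t^{-h}$ and then pushing any two such conjugates down to a common level $M\ge\max(j,k)$, where the relation $t^{-n}at^{n}=a^{p^{n}}$ turns them into commuting powers of $a$ --- is the standard and complete justification.
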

\begin{cor}[Geodesics]\label{cor:NPNP}
A geodesic cannot contain a subword of the form $NPNP$ or $PNPN$.
\end{cor}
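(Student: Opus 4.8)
The plan is to prove the statement in its strong contrapositive form: \emph{every} word of the form $NPNP$ or $PNPN$ fails to be geodesic. This suffices, because any subword of a geodesic is itself geodesic (if $w=xsy$ is geodesic and $s=_Gs'$ with $\ell(s')<\ell(s)$, then $xs'y=_Gw$ is strictly shorter, a contradiction), so a geodesic can contain no non-geodesic subword. I also only need to treat one of the two patterns: since $w$ is geodesic if and only if $w^{-1}$ is, and inverting reverses the string while turning each $P$-block into an $N$-block and vice versa, the inverse of an $NPNP$ word is a $PNPN$ word. Thus it is enough to show that an arbitrary $w=N_1P_1N_2P_2$ is non-geodesic.

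Next I would pass to the level picture. Reading $w$ from left to right, the level descends through $N_1$ to a valley, rises through $P_1$ to an interior peak, descends through $N_2$ to a second valley, and rises through $P_2$; the natural target is the single interior peak formed at the junction $P_1N_2$. Writing the apex explicitly, the final ascending $t$ of $P_1$, the terminal $a$-block of $P_1$, the leading $a$-block of $N_2$, and the first descending $t^{-1}$ of $N_2$ combine into a subword $t\,a^{s}\,t^{-1}$ for some integer $s$. When $|s|\ge p$ I can apply the pinch $t a^{p}t^{-1}=_G a$, or more generally $t a^{pq+r}t^{-1}=_G a^{q}\,t a^{r}t^{-1}$, which removes $q(p-1)>0$ letters; this already produces a shorter representative and disposes of the ``generic'' case (and when $s=0$ the apex is simply $t\,t^{-1}$, an even cheaper cancellation).

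The main obstacle is the case $|s|<p$, where no pinch acts at the apex and the peak is locally irreducible — yet the word is still non-geodesic (for instance $t^{-1}atat^{-1}ata=_Ga^{2p+2}=_G t^{-1}a^2ta^2$, shorter, with every apex travel equal to $1<p$). Here the shortening must come from the interaction of the central peak with the two flanking descents, and this is where the Commutation Lemma~(\ref{lem:comm}) enters: blocks of $t$-exponent sum zero may be transported past one another without changing the group element or the length, which lets me relocate the horizontal ($a^{\pm1}$) content sitting below the peak. The delicate point, and the crux of the whole argument, is to show that the two descending excursions $N_1$ and $N_2$ can always be amalgamated by such commutations into a single valley, so that $N_1P_1N_2P_2$ is rewritten with strictly fewer $t^{\pm1}$ letters and no increase in length. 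I expect to organize this as an induction on the number of $t$-letters: each amalgamation collapses a valley--peak--valley profile to one with a single interior extremum (the allowed $NPN$ or $PNP$ shape), and a strict length decrease at the first step is exactly what the geodesic conclusion requires.
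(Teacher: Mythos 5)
Your framing is right and matches the paper's intent: reduce to showing that every word of the form $NPNP$ (equivalently, by inversion, $PNPN$) is non-geodesic, and get the length reduction from Lemma~\ref{lem:comm}. But the proof is not actually there. The step you yourself call ``the delicate point, and the crux of the whole argument'' --- that the two descents can always be amalgamated by commutations with a \emph{strict} length decrease --- is only announced (``I expect to organize this as an induction\dots''), never carried out. That is precisely the content of the corollary, so as written the proposal is a plan, not a proof. Moreover, the part you do work out in detail, the pinch $ta^{pq+r}t^{-1}=_Ga^{q}ta^{r}t^{-1}$ at the central apex, is a false lead: it is not needed for the argument that actually works, and in the hard case $|s|<p$ it tells you nothing, which is exactly where you stop.

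The mechanism the paper relies on is more specific than ``relocate the horizontal content'': in a freely reduced word of the form $NPNP$ or $PNPN$ one can find three positions $i<j<k$ at which the running $t$-exponent takes the same value, chosen so that the two adjacent blocks $B=w(i,j]$ and $C=w(j,k]$ each have $t$-exponent sum zero and so that swapping them via Lemma~\ref{lem:comm} juxtaposes a cancelling pair $t^{\pm1}t^{\mp1}$; the free cancellation drops the length by $2$ in a single step, no induction required. (On your own example $t^{-1}atat^{-1}ata$, take the three positions at level $0$ after the first $t$, after the following $a$, and after the second $t$: swapping $B=a$ with $C=t^{-1}at$ gives $t^{-1}a(tt^{-1})ata^{2}=t^{-1}a^{2}ta^{2}$.) Proving that such a triple always exists, and that in every configuration of signs of the surrounding $t^{\pm1}$ letters some choice of swap produces a cancellation, is a finite but genuine case analysis; it is exactly what Step~3 of the algorithm (the three-strikes setup $p_1\!-\!q_1\!-\!p_2\!-\!q_2\!-\!p_3$ and Cases 1--2) implements, and it is the piece your argument must supply before the corollary is established.
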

\begin{cor}[Pushing $a$s]\label{cor:pusha}
If $w$ is of type $NP$ and has $t$-exponent zero then $w=_Gu=t^{-k}u_P$ where $u_P$ is of type $P$ and $t$-exponent $k$, and $\ell(u)\leq \ell(w)$. If $w$ is of type $PN$ and $t$-exponent zero then $w=_Gv=v_Pt^{-k}$ where $v_P$ is of type $P$ and $t$-exponent $k$, and $\ell(v)\leq \ell(w)$.
\end{cor}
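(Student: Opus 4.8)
The plan is to prove the first statement (type $NP$); the second (type $PN$) then follows by the left-right mirrored argument. Write $w=nq$, where $n$ is the initial type-$N$ block and $q$ the terminal type-$P$ block, and record the $a$-exponents between the $t$-letters as
\[ n=a^{c_0}t^{-1}a^{c_1}t^{-1}\cdots t^{-1}a^{c_k},\qquad q=a^{d_0}t\,a^{d_1}t\cdots t\,a^{d_k}. \]
Since $w$ has $t$-exponent zero, the number of $t^{-1}$'s in $n$ equals the number of $t$'s in $q$; call it $k$, so each block carries exactly $k+1$ runs $c_0,\dots,c_k$ and $d_0,\dots,d_k$ (some possibly zero), and $\ell(w)=\big(k+\sum_i|c_i|\big)+\big(k+\sum_i|d_i|\big)$. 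Geometrically $w$ descends $k$ levels and climbs back $k$ levels; at each level it makes some horizontal $a$-moves on the way down and some on the way up, and I want to merge the two contributions level by level, leaving a path that drops straight to level $-k$ and does all its horizontal travel on the ascent — which is precisely a word of the shape $t^{-k}u_P$.

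To make the merging rigorous I would use the Commutation Lemma as the engine. First rewrite each block through the level-$i$ conjugates of $a$ via the telescoping identities
\[ n=\Big(\prod_{i=0}^{k}t^{-i}a^{c_i}t^{i}\Big)t^{-k},\qquad q=\Big(\prod_{i=0}^{k}t^{i}a^{d_i}t^{-i}\Big)t^{k}, \]
each checked by cancelling adjacent $t^{\pm}$-powers. Multiplying and conjugating the $q$-factors by the $t^{-k}$ coming out of $n$ gives $w=_G\prod_{i=0}^{k}\big(t^{-i}a^{c_i}t^{i}\big)\big(t^{-i}a^{d_{k-i}}t^{i}\big)$, where the reindexing $i\mapsto k-i$ is exactly the statement that the descent meets level $-i$ where the ascent meets it. Every factor here has $t$-exponent zero, so by the Commutation Lemma they all commute and may be collected level-by-level:
\[ w=_G\prod_{i=0}^{k}t^{-i}a^{\,c_i+d_{k-i}}t^{i}. \]
Telescoping this product back (now peeling a common $t^{-k}$ off on the left) yields $w=_G t^{-k}u_P$ with
\[ u_P=a^{f_0}t\,a^{f_1}t\cdots t\,a^{f_k},\qquad f_l=c_{k-l}+d_l, \]
which is of type $P$ with $t$-exponent $k$, as required.

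The length bound then drops out of the triangle inequality:
\[ \ell(u)=k+\ell(u_P)=2k+\sum_{l}|c_{k-l}+d_l|\le 2k+\sum_{l}|c_{k-l}|+\sum_{l}|d_l|=\ell(w). \]
I expect the only real obstacle to be bookkeeping: getting the two telescoping identities and the reindexing $i\mapsto k-i$ exactly right so that the same-level exponents $c_i$ and $d_{k-i}$ genuinely line up, and confirming that collecting them via commutation introduces no $t$-letters beyond the single leading $t^{-k}$. Once the indices are aligned, the entire arithmetic reduces to the one triangle-inequality estimate above, which is what forces the length to be non-increasing.
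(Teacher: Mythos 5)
Your proof is correct and follows the same route the paper indicates: the paper disposes of this corollary with the single remark that it is ``simply a matter of commuting subwords of $t$-exponent zero past each other,'' and your telescoping into level-conjugates $t^{-i}a^{c_i}t^{i}$, reordering them via the Commutation Lemma, and the triangle-inequality length estimate is exactly a rigorous implementation of that remark. The index alignment $f_l=c_{k-l}+d_l$ and the reversed-order telescoping that peels off $t^{-k}$ on the left both check out.
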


The two corollaries are simply a matter of commuting subwords of $t$-exponent zero past each other. We will show how this can be done in linear time and $O(n\log n)$ space in the algorithm. The trick is to use {\em pointers}, which we will explain in Section \ref{sect:partone} below.

\section{$t$-exponent sum of the input word}

The algorithm we describe in this paper applies only to input words with  non-negative $t$-exponent sum. To convert words of negative $t$-exponent sum to a geodesic, we modify the procedure given here as follows. Take as input the {\em inverse} of the input word, which has positive $t$-exponent sum. Run the algorithm as described on this word, then at the end, write the inverse of the  output word as the final output. 

Rewriting the input and output words as their inverses clearly can be done in linear time and space.

\section{Algorithm part one}\label{sect:partone}

The first stage of the algorithm is to rearrange the input word and freely reduce, to convert it to a standard form.  We assume  the input word has length $n$ and has non-negative $t$-exponent sum.

\begin{prop}\label{prop:partone}
Any word $w\in G_p$ of length $n$ with non-negative $t$-exponent sum can be converted to a word $u=_G w$ of the form $$u=t^{-k} a^{\epsilon_0} t\ldots ta^{\epsilon_q}t^{-m}$$ such that \bi\item$\ell(u)\leq n$, 
\item$k,q,m\geq 0$, 
\item $q\geq k+m$,
\item $|\epsilon_0|>0$ if $k>0$,  
\item $|\epsilon_q|>0$ if $m>0$,  
\item  $|\epsilon_i|<p$ for $0\leq i<q$,
\item $|\epsilon_q|<3p$, \ei
and moreover this can be achieved  in linear time and $O(n\log n)$ space.
\end{prop}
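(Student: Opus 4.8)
The plan is to process the input word from left to right, maintaining a running normal form and exploiting the commutation lemma to push letters into their correct positions. The target form $u = t^{-k}a^{\epsilon_0}t\ldots ta^{\epsilon_q}t^{-m}$ is essentially a word of type $NPN$ (a block of $t^{-1}$'s, then a $P$-word with single $t$'s separating powers of $a$, then a block of $t^{-1}$'s), so the overall strategy is to rewrite $w$ into type $NPN$ and then clean up the exponents. I would first establish the structural rewriting and then separately argue the complexity bounds, since these are really two different tasks.

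First I would handle the **reduction to $NPN$ form**. Reading the word and freely reducing, any word is a concatenation of maximal $P$- and $N$-blocks. Corollary~\ref{cor:NPNP} tells us a geodesic avoids $NPNP$ and $PNPN$, and Corollary~\ref{cor:pusha} lets me commute and collapse $t$-exponent-zero subwords to merge adjacent blocks. Since the total $t$-exponent sum is non-negative, repeatedly applying these moves should collapse the alternating block structure down to at most the pattern $NPN$, giving the leading $t^{-k}$, the central $P$-word, and the trailing $t^{-m}$. This also forces $q \geq k+m$: the central $P$-word must supply enough $t$-letters to account for the $k$ descents at the front and the $m$ descents at the back, plus the non-negative net exponent. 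The boundary conditions $|\epsilon_0|>0$ when $k>0$ and $|\epsilon_q|>0$ when $m>0$ are just the statement that there is no free cancellation left at the seams between the $N$-blocks and the $P$-block.

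Next I would address the **exponent normalization** $|\epsilon_i|<p$ for $i<q$. Inside the $P$-word, a subword $a^{\epsilon_i}t$ with $|\epsilon_i|\geq p$ can be rewritten using $ta^{\pm1} =_G a^{\pm p}t$ (equivalently $a^p = t^{-1}at$), carrying excess powers of $a$ upward through the $t$-edges; this is the standard base-$p$ carrying and strictly shortens or preserves length. The final exponent $\epsilon_q$ is allowed the weaker bound $|\epsilon_q|<3p$ because it sits against the trailing $t^{-m}$ where carries can accumulate without a further $t$ to absorb them; I would track the worst-case accumulation to confirm the constant $3$ suffices. Throughout, I must check $\ell(u)\leq n$, which follows because every move used (free reduction, commutation via Corollary~\ref{cor:pusha}, and carrying) is non-lengthening.

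**The hard part will be** the complexity claim: doing all of this in $O(n)$ time and $O(n\log n)$ space rather than naively. A direct implementation of the commutations and carries could take quadratic time, since each carry might cascade across the whole word. The key device, as flagged in the text, is to represent the $a$-exponents not as explicit strings but via \emph{pointers} into a data structure, so that commuting a $t$-exponent-zero subword past another is a pointer update rather than a physical rewrite; the $O(n\log n)$ space accounts for storing exponents of magnitude up to $p^n$ in binary. I would argue that each of the at most $n$ letters is touched a bounded number of times under this pointer scheme, and that the carrying step, amortized across the word, contributes only linear total cost. Making the pointer bookkeeping precise — and verifying that it faithfully realizes the group-theoretic moves while respecting the stated time and space bounds — is where the real work lies, and I expect the author's proof to develop exactly this pointer machinery in detail.
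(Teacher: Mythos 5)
Your outline tracks the paper's strategy at a high level (collapse to $NPN$ via the commutation corollaries, then normalize exponents by carrying $a^{\pm p}t \mapsto ta^{\pm 1}$, with pointers doing the commutations in constant time), but two points need correction and one genuine gap remains. First, your account of the $O(n\log n)$ space is off: the algorithm never stores exponents of magnitude anywhere near $p^n$. An early pass rewrites every run $a^{\pm 3p}$ as $ta^{\pm 3}t^{-1}$, so from that point on no more than $3p-1$ consecutive $a^{\pm 1}$ letters ever appear (the constant $3$ in the bound $|\epsilon_q|<3p$ comes from this capping, not from tracking carry accumulation against the trailing $t^{-m}$). The $\log n$ factor comes from storing $O(n)$ table entries that are integers of magnitude at most $n$ (column indices and $t$-exponent values) in binary, not from large $a$-powers. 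Relatedly, your worry that "each carry might cascade across the whole word" causing quadratic time is resolved in the paper not by amortization over letters but by a geometric-series bound: when the final block $a^{M_0}$ is repeatedly carried upward, the successive residues satisfy $|M_i|\leq |M_0|/p^i$, so the total work is $O(|M_0|)=O(n)$.

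The genuine gap is the mechanism for finding and eliminating $NPNP$/$PNPN$ patterns in linear total time, which you explicitly defer. It is not enough to say the pointer scheme makes each commutation cheap; one must also \emph{detect}, in amortized constant time, when such a pattern has arisen. The paper does this by maintaining a second table indexed by $t$-exponent level: each time a $t^{\pm 1}$ is read, the running $t$-exponent and its position are recorded, and the third occurrence of the same level (``three strikes'') certifies an $NPNP$ or $PNPN$ prefix. Even then, a further subroutine is needed to determine the left-to-right order of the recorded positions, because after earlier pointer surgery the physical column order no longer reflects the word order; this ordering step works only because runs of $a^{\pm 1}$ are bounded by the constant $3p-1$, so one can scan to the nearest $t^{\pm 1}$ in $O(1)$ time. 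Without this detection-and-ordering machinery, the claim that the block structure collapses to $NPN$ ``in linear time'' is unsupported, and a naive scan for coincident levels after each commutation would be quadratic. You correctly identify that this is where the real work lies, but the proposal as written does not supply it.
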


We prove this by describing a procedure to make this conversion.

Construct a list we  call List A of $n+2$ 5-tuples, which we view as an $5\times(n+2)$ table.  Each {\em address} in the table will contain either a blank symbol, an integer (between $-n$ and $n+1$, written in binary), or the symbol $t,t^{-1},a,a^{-1}, S$ or $F$. We refer to an address by the ordered pair (row,column). Note the space required for List A is therefore $O(n\log n)$ since entries are  integers in binary or from a fixed alphabet.
\bi\item Write the numbers 0 to $n+1$ in the first row. These entries will stay fixed throughout the algorithm. 
\item Row 2 will store the input word. Write $S$  for {\em start} at address (2,0),  then the input word letter by letter in addresses $(2,1)$ to $(2,n)$, and at address $(2,n+1)$ write   $F$ for {\em finish}. 
As the algorithm progresses, these entries will either remain in their original positions, or be erased (and replaced by a blank symbol). $S$ and $F$ are never erased. 
\item Row 3 will contain no entries at the beginning. As the algorithm progresses we will use the addresses in this row to store integers (between $-n$ to $n$).
 \item Write the numbers 1 through $n+1$ in the first $n+1$ addresses of row 4. Leave the final address blank. This row will act as a {\em pointer} to the next column address to be read. As the algorithm progresses, the entries in this row may change. 
 \item  In row 5, write a blank symbol in the first address, then write the numbers 0 through $n$ in the remaining addresses. This row indicates the  previous column address that was read (so are ``backwards pointers"). As the algorithm progresses, the entries in this row may change. 
\ei

Here is List A in its initial state, with input word $at^2a\ldots at^{-1}$.
\[\ba{l}\mathrm{List}\; \mathrm{A} \hspace{6mm} \downarrow
\\
\ba{|c|c|c|c|c|c|c|c|c|c|}
\hline
\mathrm{column} & 0 & 1 & 2 & 3 & 4 & \;\;\;\;\;\;\; \cdots \;\;\;\;\;\;\;  &  n-1 & n & n+1\\
\hline
\mathrm{word} &  S & a & t & t & a & \;\;\;\;\;\;\; \cdots \;\;\;\;\;\;\; & a &  t^{-1} & F \\
 \hline
t-\mathrm{exp} &  &  &  &  &  &  &  &   &  \\
 \hline
\mathrm{to} & 1 & 2 & 3 & 4 & 5 & \;\;\;\;\;\;\; \cdots \;\;\;\;\;\;\; & n &  n+1 &  \\
 \hline
\mathrm{from} &  & 0 & 1 & 2 & 3 & \;\;\;\;\;\;\; \cdots \;\;\;\;\;\;\; & n-2 &  n-1 & n \\   
\hline
\ea\ea\]
As the algorithm progresses, we will ``reorder'' the word written in row 2 using the pointers in rows 4 and 5 (and leaving the letters in row 2 fixed, possibly erasing some). To read the word, start at the $S$ symbol. Move to the column address indicated in row 4. At the beginning this will be column 1. From the current column, read the entry in row 4 to move the the next column. Continue until you reach the $F$ symbol. 
At any stage, to step back to the previous address, go to the column address indicated by row 5.
Throughout the algorithm, the pointers will never point to or from a column which has a blank symbol in row 2. The pointers  allow us to rearrange and delete letters from the word in row 2 efficiently (in constant time), without having to move any letters on the table.

For convenience, we indicate the current address being read by a {\em cursor}. We assume that moving the cursor from one position in the list to another takes constant time on a random access machine.

Here are two subroutines that we will use many times. Each one takes constant time to call.

\subsection*{Subroutine 1: Free reduction}
This subroutine eliminates freely canceling pairs $xx^{-1}$ in row 2 of List A, in constant time. Assume that the cursor is pointing to column $k$, and that the entry in the address (2, $k$) is not blank.

Read the entries in rows 2,4 and 5 of column $k$.
Say the letter in row 2 is $x$, and the integers in rows 4 and 5 are $i,j$.

If position $j$ row 2 is $x^{-1}$, then we can cancel this pair of generators from the word as follows:
\bi
\item read the integer in row 5 position $j$, and go to the address indicated (say it is $r$). In row 4 of this address, write $i$.
In row 5 position $i$, write $r$.
\item erase entries in columns $j$ and $k$
\item go to position $i$.
\ei
In this way, we have deleted $x^{-1}x$ from the word, and adjusted the pointers so that they skip these positions. 
\[\ba{ll}  \mathrm{List}\; \mathrm{A} \hspace{40mm} \downarrow 
\\
 \ba{|c|c|c|c|c|c|c|c|c|c}
\hline
\mathrm{column} & ... & r &...& j & ...  & k  &  ...   & i & ...   \\
\hline
\mathrm{word} & && &x^{-1} && x&&& \\
 \hline
t-\mathrm{exp} & &  &  &  &  &  &  &   &  \\
 \hline
 \mathrm{to} &&&&&& i&&&\\
 \hline
 \mathrm{from} & &  &  & r &  & j &  &  &  \\   
\hline
\ea\ea\]
\[
\leadsto 
\ba{l}\mathrm{List}\; \mathrm{A} \hspace{49mm} \downarrow
\\
\ba{|c|c|c|c|c|c|c|c|c|c}
\hline
\mathrm{column} &  ... & r & ...  & j &  ...   & k  &  ...   & i & ...   \\
\hline
\mathrm{word} & && &\times && \times&&& \\
 \hline
t-\mathrm{exp} & && &\times && \times&&& \\
 \hline
\mathrm{to} & & i& &\times && \times&&& \\
 \hline
\mathrm{from} &&& &\times && \times&&r& \\
\hline
\ea\ea\]

Else, if position $i$ row 2 is $x^{-1}$, we perform a similar operation to erase $xx^{-1}$ from the word, adjusting pointers appropriately.

Assuming that we can access positions using the pointers in constant time (that is, we have a random access machine), then this procedure takes constant time to run.

\subsection*{Subroutine 2:  Consecutive $a$s}
This subroutine eliminates the occurrence of subwords $a^{\pm 3p}$ (where $p\geq 2$ is fixed) in constant time.  Bounding the number of consecutive $a$ and $a^{-1}$ letters will be important for the time complexity of the algorithm later on.

 Again, assume the  cursor is pointing to column $k$  and the entry at address (2,$k$)  is not blank.
\bi\item If the letter at this address is $a$, set a counter $\mathtt{acount=1}$. Move back one square (using pointer in row 5) to column $j$. If address (2,$j$) is $a$, increment $\mathtt{acount}$. Repeat until $\mathtt{acount=3p}$ or the next letter is not $a$. Note maximum number of steps is $3p$ (constant). 

If $\mathtt{acount=3p}$ and you are at column $i$, 
write $ta^3t^{-1}$ over the first 5 $a$s, and blank symbols in the remaining $3p-5$ addresses up to position $k$. Adjust the pointers  so that the pointer at the added $t^{-1}$ points to the  value indicated at (4,$k$), and write the appropriate value in row 5 of that position.
\item If the letter at this position is $a^{-1}$, do the same with $a^{-1}$ instead of $a$.
\ei

This procedure takes constant time, and if it is {\em successful} (that is, replaces $a^{\pm 3p}$ by $t^{-1}a^{\pm 3} t$) it strictly reduces the length of the word.

We are now reading to describe part one of the algorithm, proving Proposition \ref{prop:partone}.

\subsection*{Step 1} Write the input word in freely reduced form on List A as follows. Read the first letter and write it in address $(1,2)$ of List A. For each subsequent letter if it freely cancels with the previous letter in row 2, erase the previous letter and continue. At the same time record the successive $t$-exponent sum of the word by incrementing and decrementing a counter each time a $t^{\pm 1}$ is read.

So the word in row 2 of the tape is freely reduced and has nonnegative $t$-exponent (by assumption). Fill in rows 1,4 and 5 of List A with the column numbers and pointers set to the initial state.

\subsection*{Step 2}  
In this step we eliminate all occurrences of $a^{\pm 3p}$ in the word.

Set $k=3p-1$.  Assume the entire word in row 2 is freely reduced, and  contains no more than $3p-1$ consecutive $a$s or $a^{-1}$s up to column $k$.

\bi\item Move cursor to column $k$. Following the pointer in row 4 move to the next column after $k$. If the letter in row 2 is $a^{\pm 1}$ then  perform Subroutine 2. 
\item
If the subroutine  finds $a^{\pm 3p}$, then with the cursor at the each end of the  inserted word (of length 5), perform Subroutine 1. Repeat until Subroutine 1 finds no more canceling pairs and so the entire word in row 2 is freely reduced. 
Set $k=$ the column to the right of the previous $k$ and repeat.
\ei

At the end of this procedure, the entire word is freely reduced and has no $a^{\pm 3p}$ subwords. The number of times Subroutine 2 is performed is at most the number of times we iterate the above steps, which is at most $n-3p$, and the total number of times we perform Subroutine 1 is $O(n)$ since each time it is successful the word reduces length, so it is successful at most $n$ times and unsuccessful at most twice (for each end) after each application of Subroutine 2.

So we now have a freely reduced word with less than $3p$ $a^{\pm 1}$ letters in succession, in row 2 of List A. The pointers in row 4 still point to columns to the right, since we have not commuted any subwords yet.

\subsection*{Step 3} 
Construct a second list  we call List B of $2n+1$ 4-tuples, which we view as a $4\times (2n+1)$ table. 
In the first row write the integers from $-n$ to $n$. 

Starting at column 0 of List A, set a counter \texttt{texp}=0. Reading the word in row 2 from left to right, if in column $k$  you read a $t^{\pm 1}$ letter, add $\pm 1$ to \texttt{texp}, and write the value of  \texttt{texp} at address $(3,k)$ of List A. In List B, if address (2,\texttt{texp}) is blank,  write $k$. If
it contains a value, write $k$ in address (3,\texttt{texp}) if it is blank, otherwise in  address (4,\texttt{texp}).

In other words, each time you read a $t^{\pm 1}$, write the current $t$-exponent sum underneath it, and in List B keep a record of how many times this $t$-exponent has appeared (which we call the number of ``{\em strikes}'' for that exponent) and at which positions in List A it appeared.

 Here we show List A for the input word $a t^2 a t a t^{-1} a t^{-1} a t^{-1} a t a  \ldots$, and the corresponding List B, as
an example.
\[\ba{ll}  \mathrm{List}\; \mathrm{A} \hspace{89.5mm} \downarrow 
\\
\ba{|c|c|c|c|c|c|c|c|c|c|c|c|c|c|c|c|c}
\hline
\mathrm{column} & 0 & 1 & 2 & 3 & 4 & 5 & 6 & 7 & 8 & 9 & 10 & 11 & 12 & 13 & 14  & \cdots\\
\hline
\mathrm{word} & S & a & t & t & a & t & a & t^{-1} & a & t^{-1} & a & t^{-1} & a & t & a  & \\
 \hline
t-\mathrm{exp} &  &  & 1  & 2 &  & 3 &  & 2  & & 1& & 0 & & 1 & & \\
 \hline
\mathrm{to} & 1 & 2 & 3  & 4 & 5 & 6 & 7 & 8  &9 & 10& 11& 12 & 13& 14 &15 & \\
 \hline
\mathrm{from} &  &  0& 1  & 2 &   3 & 4 & 5  &6 & 7& 8& 9 &10 & 11 & 12&13 & \\
\hline
\ea\\
\\
\mathrm{List}\; \mathrm{B}\\
\ba{|c|c|c|c|c|c|c|c|c|c|c|c|c|c|c|c}
\hline
t-\mathrm{exp} & \cdots & -6 & -5  & -4 & -3 & -2 & -1 & 0  & 1& 2& 3& 4 & 5& 6 & \cdots\\
 \hline
\mathrm{strike}\;1&  &  &   &  &  &  &  & 11  & 2& 3& 5&  & &  &  \\
 \hline
\mathrm{strike}\;2 &  &  &   &  &  &  &  &  & 9& 7& &  & &  &  \\
\hline
\mathrm{strike}\;3  &  &  &   &  &  &  &  &  & 13& & &  & &  &  \\
\hline
\ea\ea\]

When an entry occurs in the last row of List B at some position labeled \texttt{texp}, meaning the same exponent has occurred 3 times , then we have a prefix of the form either $NPNP$ or $PNPN$, so we  apply Corollary \ref{cor:NPNP} as follows.
Suppose the entries in this column are $p_a,p_b,p_3$, with $p_3$ the most recently added.
These correspond to the positions in List A where the value \texttt{texp} have appeared.

To begin with, the word written in row 2 of List A appears in the correct order (from left to right), the pointers have only been used to possibly skip blank addresses. So at the start of this step we know that $p_a$ comes before $p_b$. However, 
as the algorithm progresses, we will not know which of $p_a$ and $p_b$ comes first in the word. That is, as we introduce pointers to List A to move subwords around, a letter in column $p$ could sit before a letter in column $q$ with $q<p$. We do know that $p_3$ is the right-most position.

The word read in its current order is either $\ldots p_a \ldots p_b \ldots p_3\ldots $ or \\
$\ldots p_b \ldots p_a \ldots p_3\ldots $.
We can determine the order with the following subroutine.

\subsection*{
Subroutine 3: Determine order of $p_a,p_b$.}

Starting at $p_a$, scan back (using pointers in row 5) through the word to the position of the previous $t^{\pm 1}$ letter, or the $S$ symbol.  Since we have at most $3p-1$ consecutive $a^{\pm 1}$ letters,  this takes constant time. Do the same for $p_b$.

If we come to $S$ from either $p_a$ or $p_b$, then we know that this position must come first.

If both $p_a,p_b$ are preceded by  $t^{\pm 1}$ letters, then we need more information.  Start at $p_a$ and scan forward to the first $t^{\pm 1}$, whose  position we call $q_a$. Start at $p_b$ and scan forward to the first $t^{\pm 1}$, call this position $q_b$. This takes constant time since there are at most $3p-1$ consecutive $a^{\pm 1}$ letters.

 Now one of  columns $q_a,q_b$ must contain a $t^{\pm 1}$ in row 2, with sign  opposite to that of $p_3$.

\bi\item If $q_a$ is same sign as $p_3$, then order must be $p_a-q_a-p_b-q_b-p_3$

\item If $q_b$ is same sign as $p_3$, then order must be $p_b-q_b-p_a-q_a-p_3$

\item Both  $q_a,q_b$ have opposite sign to  $p_3$.
In this case, we look at the letters in row 2 of columns  $p_a, p_b$. If the letter at address $(2,p_a)$ has opposite sign to that in$(2,p_3)$, then it must come first, since one of $p_a,p_b$ must match up with $q_a,q_b$.

If both $p_a,p_b$ have same letter as $p_3$ in row 2, then we are in a situation like $tat^{-1}atat^{-1}at$. But since there is a $t^{\pm 1}$ letter preceding both $p_a$ and $p_b$, then the $t$-exponent before $p_a,p_b,p_3$ are read is the same, and is recorded three times. This case cannot arise since we apply this procedure the first time we see the same number more than twice.

\ei

Using this subroutine we can determine the correct order of the columns $p_a,p_b$ and $q_a,q_b$, in $O(n)$ time.
Rename the first position $p_1$ and second $p_2$, and $q_1,q_2$ as appropriate.
So we have $p_1-q_1-p_2-q_2-p_3$

The subword between positions $q_1$ and $p_2$ has $t$-exponent 0, as does the subword from $q_2$ to $p_3$. By commuting one of these subwords (using Lemma \ref{lem:comm}) we can place a $t$ next to a $t^{-1}$ somewhere and get a free cancellation. The precise instruction will depend on the letters at each of these addresses, and we will consider each situation case-by-case.

\subsection*{Case 1}
\[\ba{ll}  \mathrm{List}\; \mathrm{A}  
\\
\ba{|c|c|c|c|c|c|c|c|c|c|c|c}
\hline
\mathrm{column} & \;\cdots \;& p_1 &\; \cdots \; & q_1 &\; \cdots \;& p_2  & \; \cdots \;   & q_2 & \;\cdots\; & p_3&\; \cdots \; \\
\hline
\mathrm{word} & & t & & t^{-1} &   & t  &    & t^{-1} && t &\\
 \hline
t-\mathrm{exp} & & k_1 & & k_2 &   &   &    &  &&  &\\
 \hline
\mathrm{to} & & i_1 & & i_2 &   & i_3  &    &  &&  &\\
 \hline
\mathrm{from} & & j_1 & & j_2 &   & j_3  &    &  &&  &\\
\hline
\ea\ea\]

Between $p_1$ and $q_1$ we have only $a$ letters (or nothing).
So we will commute the subword $q_1-p_2$ back towards $p_1$ as follows: 
\bi\item $j_1$ row 4, replace $p_1$ by $i_2$
\item $i_2$ row 5, replace $q_1$ by $j_1$

\item $p_2$ row 4, replace $i_3$ by $i_1$
\item $i_1$ row 5, replace $p_1$ by $p_2$

\item $j_2$ row 4, replace $q_1$ by $i_3$
\item $i_3$ row 5, replace $p_2$ by $j_2$
\item delete columns $p_1,q_1$
\item delete $p_1$ and $q_1$ from List B columns $k_1$ and $k_2$ respectively.
\ei
This has the effect of moving the subword back through the word, but without changing more that a constant number of entries in the lists (with random access).

The next case applies to our running example shown above.

\subsection*{Case 2} 
\[\ba{ll}  \mathrm{List}\; \mathrm{A}
\\
\ba{|c|c|c|c|c|c|c|c|c|c|c|c}
\hline
\mathrm{column} & \;\cdots \;& p_1 &\; \cdots \; & q_1 &\; \cdots \;& p_2  & \; \cdots \;   & q_2 & \;\cdots\; & p_3&\; \cdots \; \\
\hline
\mathrm{word} & & t & & t &   & t^{-1}  &    & t^{-1} && t &\\
 \hline
t-\mathrm{exp} & & k_1 & &  &   &   &    & k_2 &&  &\\
 \hline
\mathrm{to} & & i_1 & &  &   &   &    &  i_2&&  i_3&\\
 \hline
\mathrm{from} & & j_1 & &  &   &   &    &  j_2&&  j_3&\\
\hline
\ea\ea\]

This time we will  commute the subword $q_2-p_3$ back past the subword of $t$-exponent zero and next to $p_1$ as follows:
\bi\item $j_1$ row 4, replace $p_1$ by $i_2$
\item $i_2$ row 5, replace $p_1$ by $j_1$

\item $p_3$ row 4, replace $i_3$ by $i_1$
\item $i_1$ row 5, replace $p_1$ by $p_3$

\item $j_2$ row 4, replace $q_2$ by $i_3$
\item $i_3$ row 5, replace $p_3$ by $j_2$

\item delete columns $q_1,p_2$
\item delete $p_1$ and $q_2$ from List B columns $k_1$ and $k_2$ respectively.
\ei

Below we show the two lists after commuting and deleting $tt^{-1}$ for our running example after this step. We read the new word off List A following the pointers as $a\leadsto at \leadsto tatat^{-1}at^{-1}a \leadsto at^{-1}\ldots$.

\[\ba{ll}  \mathrm{List}\; \mathrm{A} \hspace{92.5mm} \downarrow 
\\
\ba{|c|c|c|c|c|c|c|c|c|c|c|c|c|c|c|c|c}
\hline
\mathrm{column} & 0 & 1 & 2 & 3 & 4 & 5 & 6 & 7 & 8 & 9 & 10 & 11 & 12 & 13 & 14 &  \cdots\\
\hline
\mathrm{word} & S & a & \times& t & a & t & a & t^{-1} & a & t^{-1} & a & \times & a & t & a &  \\
 \hline
t-\mathrm{exp} &  &  & \times & 2 &  & 3 &  & 2  & & 1& & \times & & 1 & &  \\
 \hline
\mathrm{to} & 1 & 12 & \times  & 4 & 5 & 6 & 7 & 8  &9 & 10& 14& \times & 13& 3 &15 &  \\
 \hline
\mathrm{from} &  &  0& \times  & 13 &   3 & 4 & 5  &6 & 7& 8& 9 &\times & 1 & 12&10 &  \\
\hline
\ea\\
\\
\mathrm{List}\; \mathrm{B}\\
\ba{|c|c|c|c|c|c|c|c|c|c|c|c|c|c|c|c}
\hline
t-\mathrm{exp} & \cdots & -6 & -5  & -4 & -3 & -2 & -1 & 0  & 1& 2& 3& 4 & 5& 6 & \cdots\\
 \hline
\mathrm{strike}\;1&  &  &   &  &  &  &  &   & 9& 3& 5&  & &  &  \\
 \hline
\mathrm{strike}\;2 &  &  &   &  &  &  &  &  & 13& 7& &  & &  &  \\
\hline
\mathrm{strike}\;3  &  &  &   &  &  &  &  &  & & & &  & &  &  \\
\hline
\ea\ea\]

The remaining cases are similar and we leave it to the reader to imagine the instructions for each one. Corollary \ref{cor:NPNP} guarantees that some commutation will reduce length in each case.

With the cursor at position $p_3$ or if blank, the non-blank letter to its right, perform Subroutine 1 until unsuccessful, then Subroutine 2 until unsuccessful, and alternately until both are unsuccessful.
Since each successful application of a subroutine reduces word length, the total number of successes of each is $n$ throughout the whole algorithm. 
Once both are unsuccessful the entire word is again freely reduced and avoids $a^{\pm 3p}$, and the cursor is at the next non-blank  position to the right of $p_3$. We then resume Step 3 from this position.

So after performing this procedure, List A contains a possibly shorter word in row 2, which is read starting at column 0 and following pointers, and List B contains the correct data of $t$-exponents and addresses (although addresses don't stay in order).
Since we removed one of the 3 ``strikes'', we start at $p_3+1$ and continue filling out row 3 of List A, adding appropriate entries to List B, until we again get 3 strikes.
Note that we do not backtrack, so the total number of right steps  taken in Step 3 (assuming the random access model of computation allows us to read and write at any specified position in the table) is $O(n)$. The number of times we need to apply the subroutines (successfully and unsuccessfully) is also  $O(n)$ regardless of how many times Step 3 is called, so so all together this step takes  $O(n)$ time.

At the end of this step, since the word has nonnegative $t$-exponent sum, and all ``3 strikes'' have been eliminated, the word must be of the form  $E$, $P$, $PN$,  $NP$, $NPN$, or $PNP$.

\subsection*{Step 4}  
If the word at this stage is of the form $PNP$, its $t$-exponent sum must be zero, so it has a prefix of the form $PN$ and suffix $NP$, both of zero $t$-exponent sum. Say $p_1,p_2$ are the two positions that the $t$-exponent is zero. 
We commute the prefix and suffix by rewriting pointers.
If the word on List A is not written in order from left to right, we can create a new List A in which the word is in correct order, by reading the current list following the pointers. 

So after this we can assume 
the configuration of List A is as follows:
\[\ba{ll}  \mathrm{List}\; \mathrm{A} 
\\
\ba{|c|c|c|c|c|c|c|c|}
\hline
\mathrm{column} & 0 & \; \cdots \; & p_1 &\; \cdots \;& p_2  & \; \cdots \;   & n+1 \\
\hline
\mathrm{word} & S&  & t^{-1}  &    & t & & F\\
 \hline
t-\mathrm{exp} & &  & 0 &  &  0  &   &\\
 \hline
\mathrm{to} &  i_1 & & i_2& &  i_3& &\\
 \hline
\mathrm{from} & & & j_1  & & j_2 & &  j_3\\
\hline
\ea\ea\]

Then do the following:
\bi\item 0 row 4 replace $i_1$ by $p_2$
\item $p_2$ row 5 replace $j_2$ by 0

\item $j_3$ row 4 replace $n+1$ by $p_1$
\item $p_1$ row 5 replace 0 by $j_3$

\item $j_1$ row 4 replace $p_2$ by $n+1$
\item $n+1$ row 5 replace $j_3$ by $j_1$.
\ei
The word is now  the form $NPN$.

\subsection*{Step 5}  
At this point  the word in List A row 2  is of the form $E$, $P$, $PN$, $NP$ or $NPN$.
We can ascertain which of these it is in constant time simply by checking the first and last $t^{\pm 1}$ letter in the word, which lie at most $3p$ steps  from the ends of the tape (positions $S$ and $F$), following pointers.
\bi\item No $t^{\pm 1}$ letters: $E$
\item First $t$ last $t$: $P$
  \item First $t$ last $t^{-1}$: $PN$
  \item First $t^{-1}$ last $t$: $NP$  
  \item First $t^{-1}$ last $t^{-1}$: $NPN$
  \ei
  
  In the case $E$, the word is $a^i$ with $|i|<3p$, so by checking a finite list we can find a geodesic for it and be done.
 So for the rest of the algorithm assume $u$ is of the form $P,PN,NP,NPN$.

 In the last two cases $NP$ and $NPN$
 it is possible the word contains a subword of the form $t^{-1}a^{xp}t$ for an integer $x$, which will be in $\{\pm 1,\pm 2\}$. If so we want to replace it by $a^x$, which will always reduce length. This is easily done in constant time, assuming the cursor is pointing to the column containing  the first $t$ letter in the word. From this column scan back at most $3p$ letters to  a $t^{-1}$ letter and count the number of $a^{\pm 1}$ letters in between. Then if you find $t^{-1}a^{xp}t$ rewrite with $a^x$ and then move forward to the next $t$ letter. Repeat this at most $O(n)$ times 
(the maximum number of $t$ letters in the word) until no such subword appears. Note that to locate the first $t$ letter in the word to start this takes $O(n)$ time to scan the word, but you only need to do this once.

We now have a word in one of the forms $P,PN,NP,NPN$ which contains no   $t^{-1}a^{xp}t$ subword.

\subsection*{Step 6}  
In this step 
   we apply Corollary \ref{cor:pusha} to push all of the $a^{\pm 1}$ letters in the word  into a single sheet of the Cayley graph. The output of this step (the word in row 2 of List A) will be a word of the form $t^{-k}u_Pt^{-l}$ where $u_P$ is a word of type $P$ with $t$-exponent $\geq k+l$, and $k,l\geq 0$
  and the word still does not contain a subword of the form $t^{-1}a^{xp}t$.

\subsection*{Case $P$}  The word is of the form $u_P$ so done.

\subsection*{Case $PN$}
 Say \texttt{texp} is the final $t$-exponent of the word, which occurs at positions $p_1$ and $q_1$. If $q_1$ is not the end of the word (that is, there are $a^{\pm 1}$ letters at the end of the word), then we want to push the $a$ letters there back through the word to $t$-exponent 0, which starts after $p_1$. The configuration of the tape is as follows (where we assume $j_3\neq q_1$ since there are $a^{\pm 1}$ letters at the end of the word):
\[\ba{ll}  \mathrm{List}\; \mathrm{A}
\\
\ba{|c|c|c|c|c|c|c|c|}
\hline
\mathrm{column} &  \; \cdots \; & p_1 &\; \cdots \;& p_2  & \; \cdots \;   & n+1 \\
\hline
\mathrm{word} &  & t  &    & t^{-1} & & F\\
 \hline
t-\mathrm{exp}  &  &  \mathtt{texp} &  &  \mathtt{texp}  &   &\\
 \hline
\mathrm{to} &  &i_1 & & i_2&    &\\
 \hline
\mathrm{from} & &  j_1  & & j_2 & &  j_3\\
\hline
\ea\ea\]

Then do the following:
\bi\item $q_1$ row 4, replace $i_2$ with $n+1$
\item $n+1$ row 5, replace $j_3$ with $q_1$

\item $p_1$ row 4, replace $i_1$ with $i_2$
\item $i_2$ row 5, replace $q_1$ with $p_1$

\item $j_3$ row 4, replace $n+1$ with $i_1$
\item $i_1$ row 5, replace $p_1$ with $j_3$
\ei
So we have commuted the word $a^{\pm m}$ at the end, through the subword of $t$-exponent 0.
Check for cancellation of  $a^{\pm 1}a^{\mp 1}$, if this occurs then cancel. Repeat up to $3p-1$ (constant)  times.

Next, let $p_2,q_2$ be the columns at which  $(\mathtt{texp}+1)$ occurs. Repeat the procedure at this level. Again freely cancel.

Iterate this until all $a^{\pm 1}$ letters are pushed into the middle of the word, 
so the resulting word is of the form $u_Pt^{-k}$ where $u_P$ is a word of type $P$ with $t$-exponent at least $k$. Note that at the top level there is only one $a^i$ subword, which will not be canceled, so there is no free cancellation of $t$ letters in this step. At every other level there can be at most $6p-2$ consecutive $a^{\pm 1}$ letters.

\subsection*{Case $NP$} Same as previous case, this time pushing $a$s to the right.

\subsection*{Case $NPN$} Break the word into $NP$ and $PN$ subwords, with the $NP$ subword ending with $t$, and each of zero $t$-exponent sum,
 and perform the above steps to push $a$ letters to the right and left  respectively, then  ensure there is no $t^{-1}a^{ip}t$ subword. In the $NP$ prefix the 
maximum number of  consecutive $a^{\pm 1}$ letters is $6p-2$ at every level except the top level, since we cut the word immediately after a $t$, so at this level we have at most $3p-1$ consecutive $a^{\pm 1}$s, and in the $PN$ suffix we can have at most $6p-2$ at every level, so all together there could be at most $9p-3$ consecutive $a^{\pm 1}$s.

So after this step,  the positive part of the word, $u_P$,  stays within a single sheet of the Cayley graph. We write 
 \[u_P=a^{\epsilon_0}ta^{\epsilon_1}\ldots a^{\epsilon_{m-1}}ta^{\epsilon_m}\] where each $|\epsilon_i|<9p$  and with $\epsilon_0$ not a multiple of $p$ when the word is of the form $NP$ or $NPN$.

\subsection*{Step 7}
In this step we remove all occurrences of $a^{\pm p}t$ in the word. Scan to the first $t$ in row 2. If the preceding $p$ letters are $a^{\pm 1}$ then replace $a^{\pm p}t$ by $ta^{\pm 1}$. Stay at this $t$ letter and repeat until there is no  $a^{\pm p}t$, then move to the next $t$ letter. Since each replacement reduces length the time for this step is linear. At the end you have eliminated all  $a^{\pm p}t$ subwords so the word is of the form $u=t^{-k}u_Pt^{-l}=t^{-k}a^{\epsilon_0} t\ldots ta^{\epsilon_m}t^{-l}$ with $|\epsilon_i|<p$ for $i<m$. 

\subsection*{Step 8} Set $\epsilon_m=M_0$. If  $|M_0|<3p$ then stop,  part one of the algorithm is done. If $|M_0|\geq 3p$ then 
we will replace the last term  $a^{\epsilon_m}=a^{M_0}$, by a word of the form $a^{\eta_0} t\ldots ta^{\eta_s}t^{-s}$ with $|\eta_i|<p$ for $i<s$ and $|\eta_s|<3p$, as follows. 

\bi\item Go to the first $t^{-1}$ after $a^{M_0}$ on the tape, then scan back $3p$ steps. If you read $a^{\pm 3p}$ in these steps, then replace the subword by  $ta^{\pm 3}t^{-1}$, which strictly reduces length. Then scan back another $p$ steps from the $t$ letter you have inserted, and if you read $a^{\pm p}t$ then replace it by $ta^{\pm 1}$. Repeat until you don't read $p$ consecutive $a$s or $a^{-1}$s.

If you did any replacing, you now have a word of the form $u=t^{-k}a^{\epsilon_0} t\ldots a^{\epsilon_{m-1}}ta^{\eta_0}t a^{M_1}t^{-1}t^{-l}$ with $|\epsilon_i|<p$ for $i<m$ and $|\eta_0|<p$. The number of steps to do this is $O(|M_0|)$. Note that $|M_1|\leq |M_0|/p$.
\item Repeat the previous step, by scanning back $3p$ from the last $t^{-1}$ inserted. If you read $a^{\pm 3p}$ replace and repeat the procedure as before, and if $|M_i|<3p$ stop. Note that each $|M_i|\leq |M_{i-1}|/p$, so $|M_i|=\leq |M_0|/(p^i)$.
\ei
Each iteration of this takes  $O(|M_i|)=O(|M_0|/(p^i))$ steps, so in total the time for this procedure is  
 \[O(|M_0|+|M_0|/p+|M_0|/p^2+\ldots) =O(|M_0|)=O(n)\]
by the geometric series formula.

So the word on the tape is now of the form $u=t^{-k}a^{\epsilon_0}t\ldots a^{\epsilon_q}t^{-m}$ where  $|\epsilon_i|<p$ for $0<i<q$, $|\epsilon_0|>0$ if $k>0$,  $|\epsilon_q|<3p$, and since all steps together took no more than linear time and space, we have proved Proposition \ref{prop:partone}.

\section{Algorithm part 2}

The second part of the algorithm finds a geodesic for the output word $u$ of part one. 
We will show that such a geodesic can be found in the same sheets of the Cayley graph as $u$, and moreover stays close to $u$ in a certain sense, so we can compute it in bounded time and space.

For ease of exposition we treat the four possible cases -- $u$ of type $P$, $NP$, $PN$, $NPN$ -- separately.

\begin{prop}\label{prop:twotypeP}[$u$ of type $P$]
Let $u=a^{\epsilon_0}t\ldots ta^{\epsilon_q}$ be the output of part one, where $|\epsilon_i|<p$ for $0\leq i<q$,  $|\epsilon_q|<3p$, and $\ell(u)\leq n$, the length of the initial input word.

Then    there is a geodesic $v$ for $u$ of the form $v=a^{\eta_0} t\ldots ta^{\eta_q}$  or  $v=a^{\eta_0} t\ldots ta^{\eta_q}ta^{\rho_{q+1}}t\ldots ta^{\rho_{q+s}}t^{-s}$ with $s> 0$,
which can be  computed in linear time and $O(n\log n)$ space.
\end{prop}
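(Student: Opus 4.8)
The plan is to reduce the geodesic problem for $u$ to finding a minimal-weight signed base-$p$ expansion of a single integer, and then to produce that expansion in one carry-propagating sweep. Throughout I use the identification under which higher levels carry higher-order place values, so that the exponents $\epsilon_0,\dots,\epsilon_q$ are read as the signed base-$p$ digits of an integer $N=\sum_{i=0}^{q}\epsilon_i p^{i}$, with $\epsilon_0$ least significant and $\epsilon_q$ most significant; note it is exactly the top-of-word digit $\epsilon_q$ that is allowed to be oversized ($|\epsilon_q|<3p$), all others being genuine digits ($|\epsilon_i|<p$). Since $u$ is type $P$ it ends at level $q$ with horizontal displacement $N$, and any $w=_G u$ has net $t$-exponent $q$ and realizes the same displacement. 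If $w$ is confined to levels $[0,L]$ and makes net $a$-exponent $d_i$ on level $i$, then $\sum_i d_i p^{i}=N$ and
\[
\ell(w)\ \ge\ (2L-q)+\sum_i |d_i|,
\]
because $w$ needs at least $L$ letters $t$, at least $L-q$ letters $t^{-1}$, and at least $|d_i|$ letters $a^{\pm1}$ on level $i$. Hence, for words staying at levels $\ge 0$, the geodesic length equals $\min_{L\ge q}\big[(2L-q)+W_L(N)\big]$, where $W_L(N)$ is the least value of $\sum|d_i|$ over signed expansions $N=\sum_{i=0}^{L}d_ip^{i}$.

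Next I would pin down the shape of a geodesic. By Corollary \ref{cor:NPNP} its $P/N$-pattern has length at most three, and since its net $t$-exponent is $q>0$ the only surviving candidates are $P$ and $PN$. I would rule out any excursion below level $0$: because the target displacement $N$ is an \emph{integer}, the lower-order place values gained by descending below $0$ can never reduce $\sum|d_i|$, so such an excursion can be folded away without increasing length, using Lemma \ref{lem:comm} and Corollary \ref{cor:pusha}. A type-$P$ geodesic then has $L=q$ and is exactly form one; a type-$PN$ geodesic climbs to some level $q+s$ and returns, and after pushing all $a$'s out of the descending part with Corollary \ref{cor:pusha} the descent becomes the bare suffix $t^{-s}$, giving form two. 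The role of form two is precisely to supply the higher-order places $p^{q+1},\dots,p^{q+s}$ into which the oversized leading digit $\epsilon_q$ can be carried; since $|\epsilon_q|<3p$ and the carry entering level $q$ from below is bounded, $s$ is bounded by a universal constant, so the overshoot is $O(1)$.

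For the algorithm I would sweep $i=0,1,\dots,q$ maintaining a carry $c$, which stays in a bounded range because every incoming digit is bounded. At each step I write a balanced digit $\eta_i\equiv \epsilon_i+c \pmod p$ with $|\eta_i|\le p/2$ and update $c\gets(\epsilon_i+c-\eta_i)/p$; this is what realizes $W_L(N)$. Because the balanced digit is not always unique and the choice perturbs later carries, I would run a tiny dynamic program over the $O(1)$ admissible carry values rather than a pure greedy rule. At the top I would compare the two ways to discharge the remaining value $\epsilon_q+c$: absorb it into a single large $\eta_q$ (form one), or continue the sweep through the $O(1)$ extra levels and append $t^{-s}$ (form two), keeping whichever is shorter. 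Each of the $q+O(1)$ positions costs $O(1)$ time, and all digits, carries and pointers are $O(\log n)$-bit integers stored on List A, so the procedure runs in $O(n)$ time and $O(n\log n)$ space.

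The hard part will be the two optimality claims that support the reduction. First, the structural lemma that some geodesic neither descends below level $0$ nor overshoots level $q$ by more than a constant needs a genuine exchange/length argument built on top of the cited corollaries, not merely their formal statements. Second, I must verify that the balanced carry sweep, together with the bounded-carry dynamic program and the top comparison, actually attains $\min_{L\ge q}\big[(2L-q)+W_L(N)\big]$, and that this value is the true geodesic length via the matching lower bound above. I also expect the bookkeeping around the enlarged final digit, and the ties arising when $p$ is even (the digit $p/2$), to require careful but routine case analysis.
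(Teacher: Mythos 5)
Your plan is essentially the paper's algorithm recast in arithmetic rather than geometric language, but the route to correctness is genuinely different. The paper first applies Proposition \ref{prop:partone} to an arbitrary geodesic $v$ and then uses Britton's lemma on $u^{-1}v$ to kill a hypothetical prefix $t^{-d}$ (a pinch $t^{-1}a^{-\psi_0}t$ with $0<|\psi_0|<p$ is impossible), which pins down the two allowed shapes of $v$ in one stroke; it then tracks $u$ level by level through its sheet, keeping at most the two brick corners $d_1,d_2$ whose distances to $S$ differ by at most $1$, and finishes with a constant-size search at the top. Your two corners are exactly the two admissible carry states, so your ``tiny dynamic program over $O(1)$ carries'' is the same recursion; what you buy with the digit formulation is an explicit lower bound $\ell(w)\ge(2L-q)+\sum_i|d_i|$ and a clean statement of what is being optimized ($W_L(N)$ for $N=\sum\epsilon_ip^i$), which the paper never makes explicit. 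What the paper buys is that the correctness of the recursion is nearly immediate from the geometry (the brick over $a^{\epsilon_i}t$ contains $d_1,d_2$, the closest points of level $i$), whereas you must separately prove that carries stay in a bounded window containing an optimal expansion.

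Two soft spots need repair. First, your case analysis from Corollary \ref{cor:NPNP} is incomplete: a geodesic with positive $t$-exponent can a priori be of type $PNP$, $NP$ or $NPN$ as well as $P$ or $PN$ (e.g.\ $tat^{-1}at$ is $PNP$ with $t$-exponent $1$); the clean fix is the paper's, namely normalize $v$ by Proposition \ref{prop:partone} into the form $t^{-d}a^{\psi_0}t\cdots ta^{\psi_r}t^{-l}$ and then rule out $d>0$ by Britton's lemma rather than by an informal ``folding'' of the sub-level-$0$ excursion. Second, the load-bearing claims you defer --- the displacement identity $\sum_id_ip^i=N$ for an arbitrary word confined to nonnegative levels, and the fact that the bounded-carry sweep attains $\min_{L\ge q}[(2L-q)+W_L(N)]$ --- are precisely the content of the proposition; as written the proposal is a correct reduction plus an accurate to-do list, not yet a proof. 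Both claims are true and provable along the lines you indicate, so I would call this a viable alternative argument rather than a wrong one.
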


\begin{proof}
Let $v$ be some geodesic for $u$. Applying Proposition \ref{prop:partone} we can put $v$ into the form $t^{-d} a^{\psi_0} t\ldots ta^{\psi_r}t^{-l}$ 
for some integers $d,r,l$, and $\psi_0\neq 0$ if $d>0$,  $|\psi_i|<p$ for $0\leq  i<r$, $|\psi_r|<3p$ and $|\psi_r|>0$ if $s>0$.

If $d>0$ then applying Britton's Lemma to $u^{-1}v= a^{-\epsilon_q}t^{-1}\ldots t^{-1}a^{-\epsilon_0} t^{-d} a^{\psi_0} t\ldots ta^{\psi_r}t^{-l}$ we must have  a pinch  $ t^{-1}a^{-\psi_0} t$, which is not possible since $0<|\psi_0|<p$. So $d=0$ and  $v=a^{\psi_0} t\ldots ta^{\psi_r}t^{-l}$. Since $u,v$ have the same $t$-exponent sum we also have $q=r-l$, so we can write  $v=a^{\eta_0} t\ldots ta^{\eta_q}$  if $l=0$, or  $v=a^{\eta_0} t\ldots ta^{\eta_q}ta^{\rho_{q+1}}t\ldots ta^{\rho_{q+s}}t^{-s}$ with $s> 0$. 

This proves the first assertion. Now to compute it. Observe that in the Cayley graph for $G_p$, the paths $u,v$ travel up a single sheet, from level $0$ to level $q$. The suffix of $v$ may continue to travel up, and return to level $q$ via $t^{-s}$, so $v$ lies in precisely the sheet determined by $u$.

We locate  $v$ by tracking the path $u$, and all possible geodesic paths, level by level in this sheet, as follows.

Label the identity vertex by $S$ (for {\em start}). If $\epsilon_0\neq 0$,  draw a horizontal line of  $|\epsilon_0|$ $a$ edges,  to the left if $\epsilon_i<0$ and right if positive. Then draw a vertical $t$ edge up from this line, and complete the picture by drawing in the brick containing  $a^{\epsilon_0}t$ on its boundary.  Label the corner corresponding to the endpoint of $a^{\epsilon_0}t$ by $U$, and on each corner compute the distance $d_1,d_2$ back to $S$ (which will be $|\epsilon_0|+1$ and $p-|\epsilon_0|+1$).  If $|d_1-d_2|\leq 1$ then keep both labels, and store two words $g_1,g_2$ which are geodesics to these points. If their difference is greater than 1 then discard the larger label and only keep the short one, plus a geodesic word $g_1$ to it. See Figure \ref{fig:parttwo1}. If $\epsilon_0=0$ then simply draw the $t$ edge and no bricks, and store $g_1=t$.
\begin{figure}[ht!]
\bc
\psfrag{U}{$U$}\psfrag{3}{$3$} \psfrag{0}{$S$}  
\includegraphics[scale=.34]{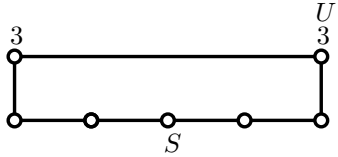} 
\ec\caption{First level. In these figures we are in the group $G_4$.}\label{fig:parttwo1}
\end{figure}

Now assume you have drawn this picture up to level $i<q$, so you have $i$ bricks stacked on top of each other vertically, and the top brick has its top corner(s) labeled $U$ corresponding to the endpoint of $a^{\epsilon_0}t\ldots a^{\epsilon_{i-1}}t$, and $d_1,(d_2)$ the shortest distance(s) back to $S$. Also you have stored geodesic(s) $g_1,(g_2)$ to the points labeled $d_1,d_2$.

From the point $U$, draw a horizontal line for $a^{\epsilon_i}$ to the left or right depending on the sign. Then draw a vertical $t$ edge up.
Now since $|\epsilon_i|<p$, the brick with boundary  $a^{\epsilon_i}t$ also contains the point(s) $d_1,(d_2)$, and so to compute the distance to the corners of the new brick, one simply computes from these points, since they are the closest points on the level $i$ in this sheet. Label the corners of the new brick in level $i+1$ by $U,d_1,(d_2)$ as before. Update $g_1,(g_2)$ by appending suffix(es) $a^jt$.  Once we compute the data for some level, we can discard the data for previous level.

\begin{figure}[ht!]
\bc
\psfrag{U}{$U$}\psfrag{6}{$6$}\psfrag{5}{$5$}\psfrag{3}{$3$}   \psfrag{8}{$8$}  
\bt{ccc}
\includegraphics[scale=.34]{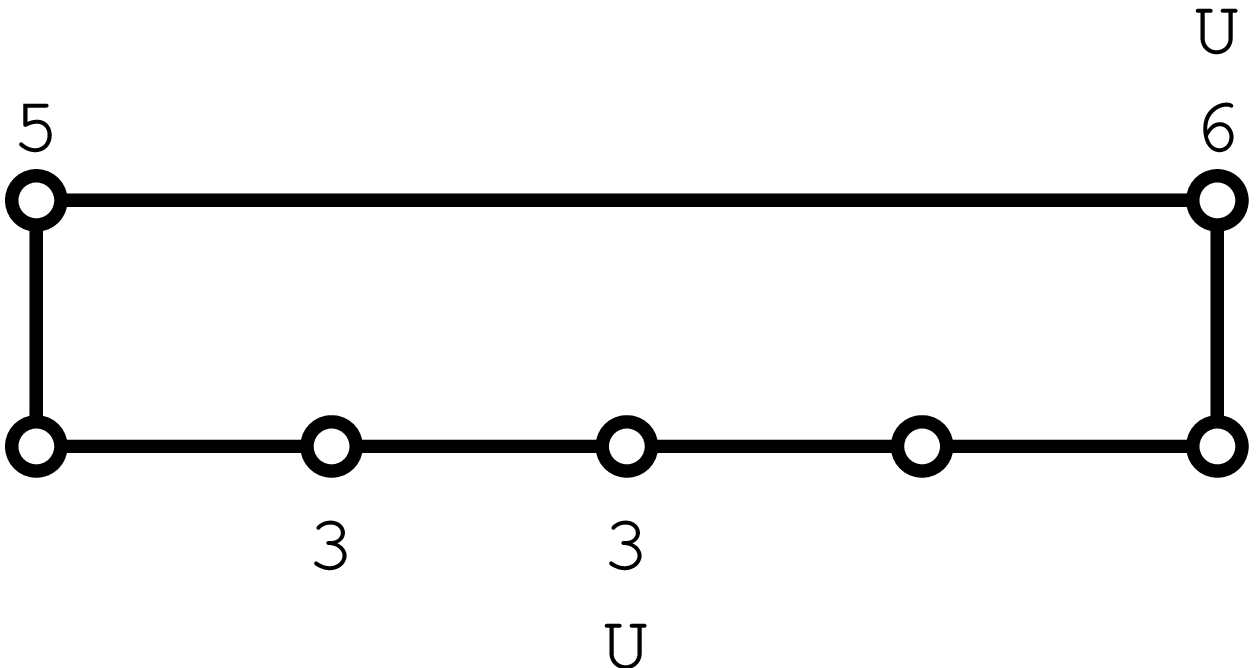} & $\;\;\;\;\;\;\;\;\;\;\;\;\;\;\;\;\;\;$ & \includegraphics[scale=.34]{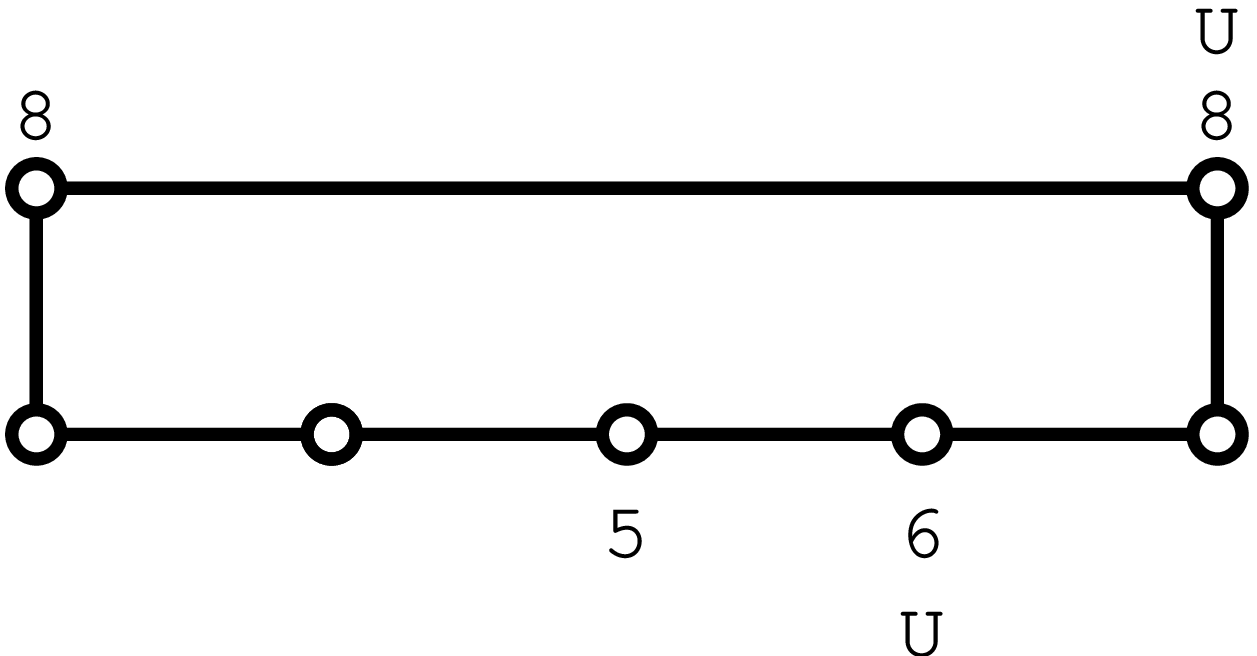} \\
{Level 2} && {Level 3}\et
\ec\caption{The next two levels}\label{fig:AA}
\end{figure}
In this way one can draw the path $u$ in its sheet up to level $q$, and keep track of the distances from $S$ to each level of the sheet, using constant time and $O(n\log n)$ space (writing the labels in binary) for each level. Figure \ref{fig:AA}  shows the next two iterations of this.

At level $q$, draw $a^{\epsilon_q}$ from $U$ to the endpoint of $u$, which we mark with $E$. Note this distance is at most $3p-1$.

Now, a geodesic to $E$ from $S$ must travel the shortest distance up from level $0$ to this level, so without loss of generality $v$ starts with one of $g_1$ or $g_2$, say $g_x$ ($x=1,2$) ending at the point labeled $d_x$, then ends with a suffix from $d_x$ to $E$ of the form $a^m$, or of type $PN$, which by Corollary \ref{cor:pusha} we may assume  has the form $ta^{\rho_{1}}t\ldots ta^{\rho_{s}}t^{-s}$ with $s> 0$,

Since  $d(E,U)<3p$ and $d(U,d_x)\leq 1$ then this suffix is equal to a word in $a^j$ or length at most $3p$, which is a fixed constant, so finding a geodesic suffix for $v$ is simply 
 a matter of checking a finite number of possible suffixes, which can be done in constant time, so we are done. See Figure \ref{fig:top}.
\begin{figure}[ht!]
\bc
\psfrag{U}{$U$}\psfrag{d1}{$d_1$}  \psfrag{d2}{$d_2$}   \psfrag{E}{$E$} 
\includegraphics[width=11.8cm]{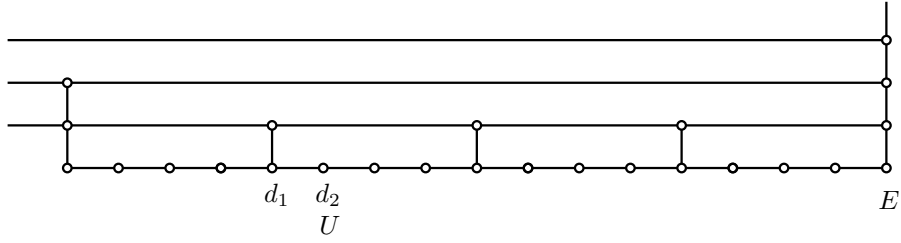} 
\ec\caption{The top level}\label{fig:top}
\end{figure}
\end{proof}

\begin{prop}\label{prop:twotypeNP}[$u$ of type $NP$]
Let $u=t^{-k}a^{\epsilon_0}t\ldots ta^{\epsilon_q}$ be the output of part one, where $q\geq k$, $|\epsilon_i|<p$ for $0<i<q$ and $|\epsilon_q|<3p$, and $\ell(u)\leq n$, the length of the initial input word.

Then    there is a geodesic $v$ for $u$ of the form $v=t^{-k}a^{\eta_0} t\ldots ta^{\eta_q}$  or  $v=a^{\eta_0} t\ldots ta^{\eta_q}ta^{\rho_{q+1}}t\ldots ta^{\rho_{q+s}}t^{-s}$ with $s> 0$. 
which can be  computed in linear time and $O(n\log n)$ space.
\end{prop}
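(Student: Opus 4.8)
The plan is to follow the template of Proposition~\ref{prop:twotypeP} in two stages: first establish that a geodesic $v$ must take one of the two displayed forms, and then exhibit a level-by-level tracking procedure that computes such a $v$ in linear time. The only genuinely new feature compared with the type~$P$ case is the initial descent $t^{-k}$, which forces a geodesic either to imitate this descent (the first form) or to avoid dropping below level~$0$ altogether and instead overshoot at the top and return by a suffix $t^{-s}$ (the second form).

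For the first stage I would argue exactly as in the proof of Proposition~\ref{prop:twotypeP}. Let $v$ be any geodesic for $u$ and, using Proposition~\ref{prop:partone}, write it as $v=t^{-d}a^{\psi_0}t\ldots ta^{\psi_r}t^{-l}$ with $\psi_0\neq 0$ whenever $d>0$, $|\psi_i|<p$ for $0\le i<r$, $|\psi_r|<3p$, and $|\psi_r|>0$ whenever $l>0$. Now consider the trivial word
\[ u^{-1}v=a^{-\epsilon_q}t^{-1}\ldots t^{-1}a^{-\epsilon_0}\,t^{\,k-d}\,a^{\psi_0}t\ldots ta^{\psi_r}t^{-l} \]
and apply Britton's Lemma. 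After freely reducing the central $t^{k}t^{-d}$, the sequence of $t$-signs is $(-)^{q}(\pm)^{|k-d|}(+)^{r}(-)^{l}$, and any pinch must involve a subword $t^{-1}a^{m}t$ or $ta^{m}t^{-1}$ on $a$-letters only. Here the hypotheses $0<|\epsilon_0|<p$ (valid since $u$ has type $NP$, so $k>0$, and by Step~6 of part one $\epsilon_0$ is not a multiple of $p$, while Step~7 gives $|\epsilon_0|<p$) and $0<|\psi_0|<p$ when $d>0$ are exactly what obstruct the junction pinches $t^{-1}a^{-\epsilon_0}t$ and $t^{-1}a^{\psi_0}t$ unless the two descending blocks match. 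I expect this to force $d\in\{0,k\}$: either $d=k$, in which case the central block disappears and the situation is \emph{identical} to the type~$P$ analysis, yielding $l=0$ and the first form $v=t^{-k}a^{\eta_0}t\ldots ta^{\eta_q}$; or $d=0$, in which case $v$ never drops below level~$0$ and the terminal $t^{-l}$ with $l=s>0$ produces the second form. In both cases equality of $t$-exponent sums, length-minimality of $v$, and Corollary~\ref{cor:pusha} fix the number of $t$ letters and put the positive part into the stated shape.

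For the second stage I would reuse the tracking machinery of Proposition~\ref{prop:twotypeP} essentially verbatim, with one adjustment. By the first stage $v$ lies in the single sheet of the Cayley graph determined by $u$, so I sweep this sheet one level at a time. The path of $u$ now forms a ``valley'': it descends monotonically from $S$ at level~$0$ to the floor at level~$-k$, then climbs monotonically to the endpoint $E$ at level~$q-k$. Starting at the floor and moving upward, at each level I maintain the at most two vertices nearest to $S$ (they differ in distance by at most~$1$, since each brick has width $p$ and $|\epsilon_i|<p$) together with geodesic words $g_1,(g_2)$ realizing those distances, discarding the previous level's data each time. The one new point is that $S$ sits in the interior of the swept range, so the nearest-to-$S$ bookkeeping must be carried correctly through level~$0$; the competition between a geodesic that dips to the floor and one that stays above level~$0$ is then resolved automatically by always retaining the nearest stored vertices, and this is precisely what selects between the two forms. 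Each level costs constant time and $O(\log n)$ space for the binary labels, so the whole sweep runs in $O(n)$ time and $O(n\log n)$ space. At the top level, since $d(E,U)<3p$ and $d(U,d_x)\le 1$, a geodesic suffix from the nearer stored point to $E$ has bounded length, so as in the type~$P$ case I finish by checking a fixed finite list of candidate suffixes (including the overshoot $ta^{\rho_{q+1}}t\ldots t^{-s}$ of the second form) in constant time.

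The main obstacle is the first stage: turning the sign-pattern heuristic above into a rigorous dichotomy $d\in\{0,k\}$, and in particular ruling out an intermediate descent $0<d<k$ and the mixed possibility $d=k$ with $l>0$. This is where the bounds $0<|\epsilon_0|<p$ and $0<|\psi_0|<p$ must be exploited carefully inside the Britton reduction, together with the absence of any $t^{-1}a^{xp}t$ subword guaranteed by Step~5 of part one. Once the form is pinned down, confirming that $v$ shares the sheet of $u$ and that the tracking window stays bounded is routine, following the type~$P$ argument line by line.
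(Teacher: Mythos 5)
There is a genuine gap in your first stage: the dichotomy $d\in\{0,k\}$ is wrong, and the case you propose to keep ($d=0$) is exactly the one that cannot occur, while the case you propose to rule out ($d=k$ with $l>0$) is the legitimate second form. Britton's Lemma applied to $u^{-1}v=a^{-\epsilon_q}t^{-1}\ldots t^{-1}a^{-\epsilon_0}t^{k-d}a^{\psi_0}t\ldots ta^{\psi_r}t^{-l}$ forces $d=k$ outright: if $d<k$ the only sign-change junction that could remove the surviving positive block is $t^{-1}a^{-\epsilon_0}t$, which is never a pinch because $0<|\epsilon_0|<p$ and $\epsilon_0$ is not a multiple of $p$ (this is the obstruction you yourself identify, but you do not follow it to its conclusion); if $d>k$ the junction $t^{-1}a^{\psi_0}t$ fails for the same reason. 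A quick sanity check also kills your second form as written: $a^{\eta_0}t\ldots ta^{\eta_q}ta^{\rho_{q+1}}t\ldots ta^{\rho_{q+s}}t^{-s}$ has $t$-exponent sum $q$, whereas $u$ has $t$-exponent sum $q-k<q$, so the two cannot be equal in $G_p$ when $k>0$. The displayed statement is missing the prefix $t^{-k}$ in its second alternative (compare Proposition \ref{prop:twotypeNPN}, where it is present); the intended conclusion, and the one the paper proves, is that $u$ and $v$ have \emph{identical} $t^{-k}$ prefixes in both cases, with $l=0$ giving the first form and $l=s>0$ the second.

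This error propagates into your second stage. The ``valley sweep'' carrying nearest-to-$S$ data through level $0$ is designed to adjudicate between geodesics that descend to level $-k$ and geodesics that stay above level $0$, but the latter do not exist, so the machinery is solving a non-problem and its correctness would in any case rest on the false dichotomy. Once $d=k$ is established, the computation is the one-line reduction the paper uses: relabel $S$ as the endpoint of the prefix $t^{-k}$ and run the tracking procedure of Proposition \ref{prop:twotypeP} verbatim from there.
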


\begin{proof}
Repeat the previous proof, this time Britton's lemma applied to $u^{-1}v$ implies that $v$ has the form  $v=t^{-k}a^{\eta_0} t\ldots ta^{\eta_q}$   or \\
 $v=$ $t^{-k}a^{\eta_0} t\ldots ta^{\eta_q}ta^{\rho_{q+1}}t\ldots ta^{\rho_{q+s}}t^{-s}$ with $s> 0$. 

So $u,v$ have identical $t^{-k}$ prefixes, after which both paths travel from level $-j$ up to level $q\geq 0$, with $v$ possibly traveling further up then back to level $q$.

The algorithm to find $v$ is identical if we make $S$ the label of the endpoint of the prefix $t^{-k}$ instead of the identity element.
\end{proof}

The final two cases are only slightly more involved than these cases.
The difference here is that the word $v$ may not go up as high as $u$. As an instructive example, suppose $u=(a^{1-p}t)^nat^{-n}$ ($p\geq 2$), which is in the form out the output of part one. This word has geodesic representative $a$, and so the geodesic for it no longer stays close. In spite of this we have the following.

\begin{prop}\label{prop:twotypePN}[$u$ of type $PN$]
Let $u=a^{\epsilon_0}t\ldots ta^{\epsilon_q}ta^{\delta_{q+1}}t\ldots ta^{\delta_{q+r}}t^{-r}$ be the output of part one, where $r\geq 1$, $|\epsilon_i|<p$ for $0\leq i\leq q$, $|\delta_{q+i}|<p$ for $1\leq i<r$, $0<|\delta_{q+r}|<3p$, and $\ell(u)\leq n$, the length of the initial input word. Note that $u$  ends at level $q$.

Then    there is a geodesic $v$ for $u$ of the form $v=a^{\eta_0} t\ldots ta^{\eta_q}$  or  $v=a^{\eta_0} t\ldots ta^{\eta_q}ta^{\rho_{q+1}}t\ldots ta^{\rho_{q+s}}t^{-s}$ with $s> 0$,
which can be  computed in linear time and $O(n\log n)$ space.
\end{prop}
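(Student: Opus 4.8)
The structure should follow the two previous propositions closely, but now I must confront the phenomenon illustrated by the example $u=(a^{1-p}t)^n a t^{-n}$: the geodesic $v$ need not ascend as high as $u$. The plan is first to nail down the algebraic form of $v$ via Britton's Lemma, then to exhibit a tracking procedure that walks up the sheet computing shortest distances level by level, being careful that at each level I must account for two competing ways of reaching the endpoint of $u$ (ascending through the sheet versus the $PN$-suffix descending back down), and finally to argue that the bookkeeping is bounded in time and space.

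\emph{Step 1: Pin down the form of $v$.} Take any geodesic $v$ and, using Proposition~\ref{prop:partone}, put it in the standard form $t^{-d}a^{\psi_0}t\ldots ta^{\psi_r}t^{-l}$. I would compute $u^{-1}v$ and apply Britton's Lemma exactly as in the proof of Proposition~\ref{prop:twotypeP}: since $u$ is of type $PN$ with no $t^{-1}a^{xp}t$ pinch and $0<|\psi_0|<p$ when $d>0$, the same pinch-obstruction forces $d=0$. Matching $t$-exponent sums (both equal $q$) then yields $v=a^{\eta_0}t\ldots ta^{\eta_q}$ or $v=a^{\eta_0}t\ldots ta^{\eta_q}ta^{\rho_{q+1}}t\ldots ta^{\rho_{q+s}}t^{-s}$ with $s>0$, as claimed. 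This establishes the first assertion and confines $v$ to the single sheet determined by $u$.

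\emph{Step 2: Track distances level by level, reconciling the two descents at level $q$.} As in the type-$P$ case I would draw $u$ in its sheet, and at each level $i$ maintain the one or two closest points (labels $d_1,(d_2)$ within $1$ of each other) together with geodesic words $g_1,(g_2)$ realizing them; because $|\epsilon_i|<p$, the new brick at level $i+1$ always straddles the stored closest points, so the update is a constant-time computation. The genuinely new feature is the $PN$-suffix of $u$, which climbs from level $q$ to level $q+r$ and returns to level $q$ at a horizontally shifted vertex $E$. I would continue the same upward tracking through these extra $r$ levels to locate $E$, recording the shortest distance from $S$ to $E$. The geodesic $v$ must reach $E$ by the cheapest route, which is either to go only as high as some intermediate level and come back (the short suffix $ta^{\rho_1}t\ldots t^{-s}$), or — in degenerate cases like the example — not to ascend the suffix at all. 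Since the horizontal displacement of $E$ from the level-$q$ point $U$ is at most a fixed constant (it is $a^{\delta_{q+r}}$ conjugated down through the suffix, bounded because $|\delta_{q+r}|<3p$ and the intermediate $|\delta_{q+i}|<p$ contribute a geometrically summable horizontal offset), the suffix of $v$ from the closest level-$q$ point $d_x$ to $E$ is a word of bounded length, found by checking finitely many candidates in constant time.

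\emph{Main obstacle and complexity.} The hard part, as the example warns, is that the optimal $v$ may descend from a level strictly below $q+r$, so I cannot simply read off $E$'s coordinates and append a fixed suffix; I must compare, across the top portion of the sheet, the cost of ascending to each level and returning. The key observation that tames this is that the relevant horizontal distance between the level-$q$ vertex $U$ and the return vertex $E$ stays bounded by a constant depending only on $p$ (by the same geometric-series bound used in Step~8 of part one), so the interaction between the ascent and the suffix-descent is confined to finitely many levels near the top, and the per-level tracking cost remains constant with $O(n\log n)$ space for the binary labels. Hence, as in the previous propositions, the total running time is linear and the space is $O(n\log n)$, completing the proof.
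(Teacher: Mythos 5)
Your Step 1 is essentially the paper's argument (the paper is slightly more careful: it first replaces the $PN$ suffixes of both $u$ and $v$ by powers of $a$ before applying Britton's Lemma to $u^{-1}v$, since the pinch argument from the type-$P$ case does not apply verbatim to a word with a $PN$ tail). The genuine gap is in your Step 2 and your ``main obstacle'' paragraph, where you claim that the horizontal displacement of the endpoint $E$ from the level-$q$ point $U$ is bounded by a constant because the intermediate $\delta$'s ``contribute a geometrically summable horizontal offset.'' You have the direction of conjugation backwards. Measured in $a$-edges at level $q$, the suffix $ta^{\delta_{q+1}}t\ldots ta^{\delta_{q+r}}t^{-r}$ displaces $E$ from $U$ by $\sum_{i=1}^{r}\delta_{q+i}\,p^{\,i}$, a geometric series that \emph{grows}: for instance $u=(ta^{3})^{r}t^{-r}$ (with $p\geq 4$, $q=0$) is a legal output of part one whose endpoint sits $3(p+p^{2}+\cdots+p^{r})$ edges from $U=S$ along level $0$. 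The sum is only summable when measured at the top level $q+r$ (that is the bound exploited in Step 8 of part one, which works going \emph{up}, not down). Consequently the suffix of $v$ from $d_x$ to $E$ is not of bounded length, cannot be found by checking finitely many candidates in constant time, and the ascent/descent interaction is not ``confined to finitely many levels near the top'': your own warning example $(a^{1-p}t)^{n}at^{-n}$ has its optimal crossing at level $q=0$, which is $r=n$ levels below the top.

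What the paper does instead, and what your proposal is missing, is a linear-time sweep over \emph{all} levels of the suffix. After tracking the prefix up to level $q$, it stores the entire tower of levels $q,\ldots,q+r$, each of bounded width ($3p-1$ edges either side of the tracked points, giving $O(n\log n)$ space over $r\leq n$ levels); marks $E_r$ at level $q+r$, which \emph{is} within bounded horizontal distance of the tracked points there by the convergent direction of the geometric series; drops the vertical line $E_r,E_{r-1},\ldots$ down as far as it stays inside the stored diagram; and at each such level $q+i$ compares the candidate $g_{x,q+i}$ followed by a horizontal crossing to $E_i$ followed by $t^{-i}$. The justification that only in-diagram candidates need be considered is that a geodesic never traverses more than $3p-1$ $a$-edges at any single level. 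This is a linear number of constant-time comparisons, not the single constant-time suffix check you describe; without it, the examples above defeat your algorithm.
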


\begin{proof}
If $v$ is a geodesic for $u$, apply part one (Proposition \ref{prop:partone}) so that $v$ is of the form 
 $t^{-d} a^{\psi_0} t\ldots ta^{\psi_r}t^{-l}$ 
for some integers $d,r,l$, and $\psi_0\neq 0$ if $d>0$,  $|\psi_i|<p$ for $0\leq  i<r$ and $|\psi_r|<3p$. 
Replacing the $PN$ suffices of $u$ and $v$ by powers of $a$, then applying Britton's Lemma to $u^{-1}v$ proves that $d=0$.

So $v=a^{\psi_0} t\ldots ta^{\psi_r}t^{-l}$. Since $u,v$ have the same $t$-exponent sum we also have $q=r-l$, so we can write  $v=a^{\eta_0} t\ldots ta^{\eta_q}$  if $l=0$, or  $v=a^{\eta_0} t\ldots ta^{\eta_q}ta^{\rho_{q+1}}t\ldots ta^{\rho_{q+s}}t^{-s}$ with $s> 0$. 

This proves the first assertion. 

To compute $v$, repeat the procedure from Proposition \ref{prop:twotypeP}, tracking the path $u$ from the point $S$ at level 0 up to level $q$. So we have a line at level $q$ with points marked $U,d_1, (d_2)$. Relabel the points $ d_1, (d_2)$ as $d_{1,q},(d_{2,q})$, and the paths to these points $g_{1,q}, (q_{2,q})$.
A geodesic $v$ must travel from $S$ up to this level, so without loss of generality $v$ travels via the path $g_x (x=1,2)$, and then ends  with some  word equal to  $a^m$ for some power $m$. 
So $v=g_{x,q}v^*$ where $v^*$ is of the form $a^m$ or PN. Note that $|m|<3p$ otherwise $v$ is not geodesic. 

To compute $v^*$, we do the following. 

Draw the line at level $q$
with  $3p-1$ edges to the right and left of the point(s) $d_1,d_2$. See Figure \ref{fig:final2}.
 \begin{figure}[ht!]
\bc
\psfrag{U}{$U$}\psfrag{d1}{$d_{1,q}$}  \psfrag{d2}{$d_{2,q}$}   
\includegraphics[ width=11.8cm]{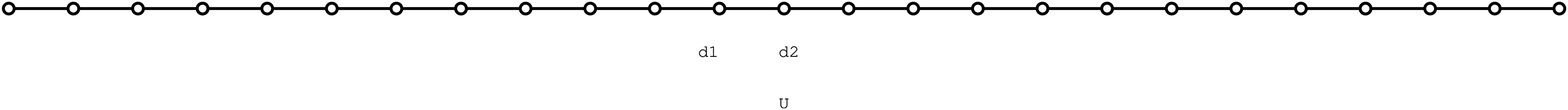} 
\ec\caption{Level $q$, with $3p-1=11$ edges on either side of $d_{1,q},d_{2,q}$ added.}\label{fig:final2}
\end{figure}
Read $a^{\epsilon_q}$ along the line from the point $U$, draw
 vertical $t$ edge up, and cover the line with bricks as before. Label the points on the next level up as $U,d_{1,q+1}, (d_{2,q+1})$. Do not discard the previous level as we did before. This time we will store all levels from $q$ to $q+r$.
 
 Again extend the line at level $q+1$ out by $3p-1$ edges on either side of these points, read $a^{\delta_{q+1}}$, and repeat. 
  \begin{figure}[ht!]
\bc
\psfrag{U}{$U$}\psfrag{d1}{$d_{1,q}$}  \psfrag{d2}{$d_{2,q}$} \psfrag{da}{$d_{1,q+1}$}  \psfrag{db}{$d_{2,q+1}$} 
\includegraphics[ width=11.8cm]{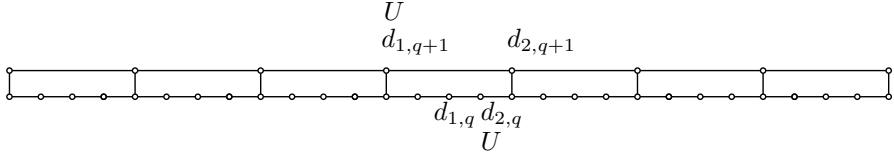} 
\ec\caption{Covering level $q$ with bricks. Here $\delta_{q+1}=1$.}
\end{figure}

Note that we are storing all these levels. Each level has a row of at most 7 bricks, so the amount of space required to store these levels with three labels on each level is $O(n\log n)$ since the labels take $O(\log n)$ space and the number of levels is $r\leq n$. Note also that we do not store each individual $g_{i,j}$ -- we store $g_{1,q+r},g_{2,q+r}$ only. 

At level $q+r$, from the point marked $U$, read $a^{\delta_{q+r}}$  and mark the endpoint by $E_r$. As before, if $v$ extends above this level, then it can go only a bounded number of levels more, so draw these layers of bricks in, aligned with the point $E_r$. So a geodesic to $E_r$ will be one of a finite number of paths, as before.
Choose a shortest path to $E_r$, append $t^{-r}$ to it, and store it as $v_r$. Note that $\ell(v_r)\leq n$ so we need $O(n)$ space to store the word.

Now the geodesic for $u$ could be $v_r$, or could be a path of the form $v_s=g_{x,q} a^{\eta_q}t\ldots ta^{\eta_q+s}t^{-s}$ with $s<r$, that is, a word that does not travel up as high as $u$, where Corollary \ref{cor:pusha} allows us to assume that all $a$ letters are pushed out of the $N$ suffix of $v$.

From the point marked $E_r$ in the stored diagram, draw a line of $t^{-1}$ edges as far down the diagram as possible, until either you reach level $q$, 
 or at some level the $t^{-1}$ edge goes out of  the diagram. If a path travels from the point labeled $U$ at level $q$ to the endpoint of the line $t^{-r}$ in the Cayley graph and travels more that $3p-1$ $a^{\pm 1}$ edges along some level, then it is not geodesic. This means that if a path for $u$ leaves the stored diagram, it is not geodesic. Therefore to locate our geodesic output  we simply must check all paths $v_s$ that lie inside the stored diagram.

Label the points along the line $t^{-r}$ from $E_r$ that stay within the diagram by $E_{r-1},\ldots, E_0$ where $E_i$ is at level $q+i$.

At level $q+r-1$, compute the length of the shortest path that travels via $g_{x,q+r-1}$ to $d_{x,q+r-1}$, the across to $E_{r-1}$ via $a^{\pm 1}$ edges, then ends by $t^{-r+1}$, where $x=1,2$. Call this path $v_{r-1}$. Compare the lengths of $v_r, v_{r-1}$ and store the shortest one. Note that the path(s)  $g_{x,q+r-1}$ are not stored, but are easily obtained by deleting their suffixes.

Repeat for each level below, storing the shortest path, and the word $g_{x,i}$ for all levels where $E_i$ is contained in the stored diagram.

When this process terminates, we have located a geodesic for the input word.
\end{proof}

\begin{prop}\label{prop:twotypeNPN}[$u$ of type $NPN$]
Let $u=t^{-k}a^{\epsilon_0}t\ldots ta^{\epsilon_q}ta^{\delta_{q+1}}t\ldots ta^{\delta_{q+r}}t^{-r}$ be the output of part one, where $q\geq k$, $r\geq 1$, $|\epsilon_i|<p$ for $0\leq i\leq q$, $|\delta_{q+i}|<p$ for $1\leq i<r$, $|\delta_{q+r}|<3p$, and $\ell(u)\leq n$, the length of the initial input word. 

Then    there is a geodesic $v$ for $u$ of the form $v=t^{-k}a^{\eta_0} t\ldots ta^{\eta_q}$  or  $v=t^{-k}a^{\eta_0} t\ldots ta^{\eta_q}ta^{\rho_{q+1}}t\ldots ta^{\rho_{q+s}}t^{-s}$ with $s> 0$,
which can be  computed in linear time and $O(n\log n)$ space.
\end{prop}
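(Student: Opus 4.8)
The plan is to treat this as the common generalization of Propositions \ref{prop:twotypeNP} and \ref{prop:twotypePN}, borrowing the prefix handling from the $NP$ case and the suffix-tracking machinery from the $PN$ case. Structurally, $u$ descends to level $-k$ along its $t^{-k}$ prefix, climbs to level $-k+q$ through its $P$ part, and its $PN$ suffix climbs higher and returns; the geodesic $v$ should keep the $t^{-k}$ prefix but may fail to climb as high in its suffix, exactly the phenomenon illustrated before the statement of Proposition \ref{prop:twotypePN}.

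To establish the form of $v$, I would take any geodesic $v$ and apply Proposition \ref{prop:partone} to put it in the form $t^{-d}a^{\psi_0}t\ldots ta^{\psi_{r'}}t^{-l}$. I would first dispose of the $PN$ suffices of both $u$ and $v$: each is a subword of $t$-exponent sum zero and, by Corollary \ref{cor:pusha} together with Lemma \ref{lem:comm}, equals a power of $a$ read at the terminal level. This collapses the comparison to one between two words of type $NP$, to which the Britton's lemma argument of Proposition \ref{prop:twotypeNP} applies verbatim, forcing $d=k$ and the shared $t^{-k}$ prefix. Matching $t$-exponent sums then yields the two stated forms, with the suffix exponent $s$ allowed to be smaller than $r$.

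To compute $v$, I would relabel the endpoint of the common prefix $t^{-k}$ as $S$, the device already used in Proposition \ref{prop:twotypeNP}, so that all tracking is carried out relative to level $-k$. From $S$ I would run the procedure of Proposition \ref{prop:twotypePN} without change: track $u$ level by level up to level $q$, at each level retaining only the nearest point(s) $d_1,(d_2)$ together with geodesics $g_1,(g_2)$ to them; then store every level from $q$ up to $q+r$, each extended by $3p-1$ edges on either side of the marked points, and compute a shortest path $v_r$ to the top endpoint $E_r$. Finally, descending the line of $t^{-1}$ edges from $E_r$, at each level $q+i$ lying inside the stored diagram I would form the candidate $v_{q+i}$ through $g_{x,q+i}$ and keep the shortest candidate. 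The argument that no geodesic leaves the stored diagram, since such a path would run more than $3p-1$ consecutive $a^{\pm1}$ edges along some level and so fail to be geodesic, transfers directly. Reading the $t^{-k}$ prefix costs $O(k)$, the tracking is constant work per level over $O(n)$ levels, and storing the $r\leq n$ levels uses $O(n\log n)$ space, so the whole computation runs in linear time and $O(n\log n)$ space.

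The step I expect to be the main obstacle is justifying $d=k$. The subtlety is that the Britton's lemma argument must simultaneously control the $t^{-d}$ prefix and the $t^{-l}$ suffix of $v$; the clean route is to collapse both $PN$ suffices to powers of $a$ before invoking Britton's lemma, and one must check that this collapse is length-nonincreasing and genuinely reduces the situation to the $NP$ comparison, so that the forbidden pinch $t^{-1}a^{-\psi_0}t$ with $0<|\psi_0|<p$ reappears as the contradiction.
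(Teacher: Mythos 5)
Your proposal is correct and takes essentially the same route as the paper, which likewise forces the $t^{-k}$ prefix via part one and Britton's lemma (collapsing the $PN$ suffices to powers of $a$ as in the $PN$ case) and then reruns the tracking procedure of Proposition \ref{prop:twotypePN} with $S$ relocated to the endpoint of $t^{-k}$. The one worry you flag is not actually an obstacle: the collapse of the $PN$ suffices need not be length-nonincreasing, since it is used only to pin down the structural form of $v$ through Britton's lemma, not to compare lengths.
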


\begin{proof} Part one and Britton's lemma proves that any geodesic for $u$ is the form of part one must start with $t^{-k}$. We then repeat the procedure 
above with the start point $S$ at the endpoint of $t^{-k}$ rather than the identity, and the result follows.
\end{proof}

\bibliography{refs} \bibliographystyle{plain}

\end{document}